\newtheorem*{theorem*}{Main Theorem}
\newtheorem{theorem}{Theorem}[section]
\newtheorem{proposition}[equation]{Proposition}
\newtheorem{corollary}[theorem]{Corollary}
\newtheorem{lemma}[theorem]{Lemma}
\theoremstyle{definition}
\newtheorem{definition}{Definition}[section]
\theoremstyle{remark}
\newtheorem{Rmk}[theorem]{Remark}
\numberwithin{equation}{section}
\newcommand{\Ker}{{\mathrm{Ker}}}
\newcommand{\IIm}{{\mathrm{Im}}}
\newcommand{\A}{{\mathbb A}}
\newcommand{\kb }{{\beta}}
\newcommand{\ka }{{\alpha}}
\newcommand{\ch}{\mathrm{ch}}
\newcommand{\Chhn}{\mathrm{Ch}^{G}_{\mathrm{HN}}}
\newcommand{\Chhp}{\mathrm{Ch}^{G}_{\mathrm{HP}}}
\newcommand{\chg}{\mathrm{ch}^{G}}
\newcommand{\chxh}{\mathrm{ch}^{G}_{X_{\infty}^{g}}}
\newcommand{\chhh}{\mathrm{ch}^{G}_{\mathrm{HH}}}
\newcommand{\chpv}{\mathrm{ch}^{G}_{\mathrm{PV}}}
\newcommand{\chhn}{\mathrm{ch}^{G}_{\mathrm{HN}}}
\newcommand{\chhp}{\mathrm{ch}^{G}_{\mathrm{HP}}}
\newcommand{\fm}{\mathfrak{m}}
\newcommand{\MF}{\mathrm{MF}}
\newcommand{\id}{\operatorname{id}}
\newcommand{\Homb}{\mathcal{H}om} % inner homs
\newcommand{\End}{\mathrm{End}}
\newcommand{\sTr}{\operatorname{str}}
\newcommand{\Spec}{{\mathrm{Spec}}}
\newcommand{\Q}{\mathbb{Q}}
\newcommand{\JJ}{\mathcal{J}}
\newcommand{\cG}{{\mathcal{G}}}
\def\on{\operatorname}
\def\idem{\on{idem}}
\newcommand{\sK} {\mathscr{K}}
\def\an{{\mathrm{an}}}
\def\Gres{\on{Res}_{f_g}}
\def\GGres{\on{Res}_{f}}
\def\rGres{\on{res}}
\def\rrGres{\on{Res}_{f_h}}
\newcommand{\sP} {\mathscr{P}_S}
\newcommand{\tr }{{\mathrm{tr}}}
 \newcommand{\gtr }{\tr_{(Q^\hen)^g}}
\newcommand{\hen}{\mathfrak{h}}
\newcommand{\Var}{{\mathsf{Var}}}
\newcommand{\paa}{\partial}
\newcommand{\CC }{{\mathbb C}}
\newcommand{\ot }{\otimes}
\newcommand{\cO}{{\mathcal{O}}}
\newcommand{\cy}{{\mathsf{cy}}}
\newcommand{\QQ }{{\mathbb Q}}
\newcommand{\ttop}{{\mathsf{top}}}
\newcommand{\ccan}{{\mathsf{can}}}
\def\into{\hookrightarrow}
\begin{document}
\title{Standard conjecture D for local stacky matrix factorizations}

\author[B. Kim]{Bumsig Kim}
\address{Korea Institute for Advanced Study\\
85 Hoegiro, Dongdaemun-gu \\
Seoul 02455\\
Republic of Korea}
\email{bumsig@kias.re.kr}

\author[T. Kim]{Taejung Kim }
\address{Department of Mathematics Education\\
  Korea National University of Education\\
250 Taeseongtabyeon-ro, Gangnae-myeon\\
Heungdeok-gu, Cheongju-si, Chungbuk 28173\\
 Republic of Korea}
\email{tjkim@kias.re.kr}

%\date{\today}

\thanks{B. Kim was supported by KIAS individual grant MG016404 and T. Kim was supported by NRF-2018R1D1A3B07043346.}

\begin{abstract}
We establish the non-commutative analogue of Grothendieck's standard conjecture D for the differential graded category of $G$-equivariant matrix factorizations associated to an isolated hypersurface singularity where $G$ is a finite group.
\end{abstract}

\subjclass[2020]{Primary 14A22; Secondary  14B05, 32S25, 32S35}

\keywords{Equivariant matrix factorizations, Standard conjecture D, Milnor fiber, Polarized mixed Hodge structure, Chern characters, Higher residue pairings.}
\maketitle

%\tableofcontents

\section{Introduction}

Let $X$ be a smooth projective variety over a field $k$. The standard conjecture D formulated by Grothendieck states that the numerical equivalence of algebraic cycles of $X$ coincides with the homological equivalence; consider the following diagram
\[ \xymatrix{ A^*(X)\otimes_{\mathbb{Z}}\QQ \ar[r]^(.5){\cy} &  H^*(X ; \QQ) \\
                   K_0(X)\otimes_{\mathbb{Z}}\QQ \ar[ru]_{\ch ^{\ttop}} \ar[u]^{\ch }_{\cong} &  } \] 
                    where  $A^*(X)$ is the Chow ring,  $K_0(X)$ is the Grothendieck group of coherent sheaves on $X$, $H^*(X ; \QQ)$ is singular cohomology, $\cy$ is an associated cycle class map, $\ch$ is the Chern character map (see \cite[Chapter 15]{fulton} for details), and $\ch^\ttop$ is the topological Chern character map.
By the isomorphism $\ch$ in the diagram and the Hirzebruch–Riemann–Roch theorem, the standard conjecture D can be restated;  for $\ka \in K_0(X)\otimes_{\mathbb{Z}}\QQ:=K_0 (X)_\QQ$ the Euler characteristic pairing $\chi (\kb, \ka ) = 0$ for all $\beta \in K_0 (X)_\QQ$ if and only if $\ch^{\ttop} (\ka ) = 0$. To date this conjecture remains essentially open besides cases under some conditions, for instance, when $X$ is a complete intersection, $\dim_\mathbb{C} X\leq 4$, or $X$ an abelian variety.

After formulation of a non-commutative generalization of the conjecture by Marcolli and Tabuada \cite{MT, tabuada2}, Brown and Walker \cite{BW2} recently prove that the standard conjecture D holds for the dg-category of matrix factorizations of an isolated hypersurface singularity in characteristic $0$. Our aim in this paper is to extend their result to the case of $G$-equivariant matrix factorizations where $G$ is a finite group;

\begin{theorem*}
  Under \ref{enun1} and \ref{enun2} in \S~\ref{mil_fib}, for $\ka \in K_0(\MF_G(Q,f))$ the Euler pairing $\chi(\kb, \ka)= 0$ for all $G$-equivariant matrix factorizations $\kb\in K_0(\MF_G(Q,f))$ if and only if $\chhp(\ka) = 0$ where $\chhp$ is the $G$-equivariant periodic cyclic Chern character.
\end{theorem*}

\subsection{Outline of paper}
In \S~\ref{mil_fib}, after recalling definitions and some preparatory material about the Milnor fibration; see \cite{Hertlingbook, Kulikov}, we endow the vanishing cohomology with an explicit polarized mixed Hodge structure in the sense of Scherk and Steenbrink \cite{SS, steenbrink} and Hertling \cite{Hertling} by realizing it as a subspace of a certain $\mathcal{D}$-module and the positive  definite subspace of a polarization $S$ is described in \S~\ref{PMHS}.  In \S~\ref{HRpair} we introduce two higher residue pairings, K. Saito's higher residue pairing and a higher residue pairing associated with the polarization $S$. Their relations are characterized in Proposition~\ref{prop_1} and make it possible to associate the polarization $S$ to Grothendieck's residue pairing. In \S~\ref{milequi}, \S~\ref{gmat}, and \S~\ref{grogp}, we collect definitions and fundamental facts of the Milnor fibration in the $G$-equivariant setting, the $G$-equivariant matrix factorizations, and the Grothendieck group of the category.
Moreover, in \S~\ref{chench} and \S~\ref{chenhn} we realize the Hochschild homology and the negative cyclic homology in explicit ways via the Hochschild-Kostant-Rosenberg isomorphisms. It enables us to characterize two $G$-equivariant Chern characters explicitly in \S~\ref{mainproof}. Adapting the strategy of proving the non-equivariant case by Brown and Walker \cite{BW2} to the equivariant point of view, in \S~\ref{chentype} we introduce the $G$-equivariant version $\chxh$ of Chern-character-type map $\ch_{X_\infty}$ and in \S~\ref{mainproof}, we prove the main theorem, i.e., Theorem~\ref{math1}, by relating the Euler pairing $\chi$ with the polarization $S$ by applying the $G$-equivariant version of the Hirzebruch–Riemann–Roch theorem for matrix factorizations by Polishchuk and Vaintrob \cite{PV: HRR}.

\section*{Acknowledgments}
The second author would like to express his deepest gratitude to the first author, Bumsig Kim who passed away prior to the completion of this manuscript, for sharing his friendship and mathematical insights.

We thank the anonymous referee for careful reading of the manuscript and the helpful comments and suggestions.

\section{The Milnor fibration and the polarized mixed Hodge structure}
\subsection{Milnor fibration and Brieskorn lattice}\label{mil_fib}
Assume that 
  \begin{enumerate}[label=(\roman*)]
\item $Q = \mathbb{C}[x_0, \dots, x_n]$;\label{enun1}
\item $f$ is an element of $\fm := (x_0, \dots, x_n) \cdot Q$ 
such that  the only singularity of the associated morphism $f: \Spec(Q) \to \A^1_\mathbb{C}$  of smooth affine varieties
is at $\fm$.\label{enun2}
\end{enumerate}
  Consider an open ball $B_{\epsilon}$ of radius $\epsilon$ small enough centered at the origin in $\mathbb{C}^{n+1}$. Letting $X:= f^{-1}(T) \cap B_{\epsilon}$, $f: X \to T$ denotes the map induced by $f: \Spec(Q) \to \A^1_\mathbb{C}$ by abuse of notation where $T$ is an open disc of radius $\eta$ centered at the origin in $\mathbb{C}^1$ with $\eta\ll\epsilon$ so that $f' : X' \to T'$ becomes a fibration where $X' = X\setminus f^{-1}(0)$ and $T'=T\setminus\{0\}$.

Let $\pi:T_\infty \to T'$ be the universal cover of $T'$ and $X_\infty:=X'\times_{T'}T_\infty$, which is homotopy equivalent to $\bigvee^{\mu} S^n$ by a theorem of Milnor where $\mu$ is called the Milnor number. Since $f'$ is a fibration over $T'$,  $H^n(X_\infty ;  \mathbb{C})$, the so-called vanishing cohomology, is equipped with a monodromy operator $M$. Its generalized eigenspace corresponding to an eigenvalue $\lambda$  is denoted by 
$$
H^n(X_\infty; \mathbb{C})_\lambda:= \bigcup_{j \geq 1}  \ker((M-\lambda\cdot \id)^j),$$
$H^n(X_\infty; \mathbb{Q})_{\neq 1}:= H^n(X_\infty; \mathbb{Q})\bigcap \bigoplus_{\lambda\ne1}H^n(X_\infty; \mathbb{C})_\lambda$, and similarly $H^n(X_\infty; \mathbb{Q})_\lambda$ is defined.

Let $\mathcal{G}:=i_\ast(\mathbf{R}^n f'_* \mathbb{C}_{X'}  \otimes_{\mathbb{C}_{T'}} \cO^{an}_{T'})$ where  $i: T' \into T$ is the inclusion and  $\mathcal{G}_0$  be the stalk of $\mathcal{G}$ at the origin, which is a vector space over $\mathbb{C}\{t\}[t^{-1}]$. Choosing a value of logarithm $\alpha=(-1/2\pi i)\ln \lambda$, one may define a mapping
\begin{equation}\label{psi1}
  \psi_\alpha:H^n(X_\infty; \mathbb{C})_\lambda\to\mathcal{G}_0\text{ by }
  \psi_\alpha(A):=es(A,\alpha)_0
  \end{equation}
where the elementary section $es(A,\alpha)(t):=t^\alpha\exp(-\log t\cdot \frac{N}{2\pi i})A(t)$ and $A(t)$ is the identification of $A\in H^n(X_\infty;\mathbb{C})$ with the space of the global flat multi-valued sections of the cohomology bundle $\bigcup_{t\in T'}H^n(X_t;\mathbb{C})$ where $X_t:=f^{-1}(t)$; see \cite[Section 9]{Kulikov} and \cite[Section 7.1]{Hertlingbook}. Note that $\psi_\alpha$ is injective and the image $\psi_\alpha(H^n(X_\infty;\mathbb{C})_\lambda)$ is $C_\alpha:=\ker(t\partial_t-\alpha)^{n+1}\subset\mathcal{G}_0$, i.e., $\bigoplus_{-1<\alpha\leq0}\psi_\ka:H^n(X_\infty;\CC)\to\bigoplus_{-1<\alpha\leq0}C_\ka$ is an isomorphism (see \cite{Hertling,Hertlingbook, Kulikov}) where the action of $\partial_t$ on $\mathcal{G}_0$ is given by $\partial_t t=\id+t\partial_t$; see \cite[Section 4.2]{BW2} and \cite[Remark 4.8]{BW2} for a complete description of $\partial_t$. One can also see that $t: C_\ka \to C_{\ka + 1}$ is an isomorphism  for each $\ka$ and
$\partial_t: C_{\ka+1} \to C_\ka$ is an isomorphism for all $\ka \ne -1$; see \cite[Section 6]{Kulikov} and \cite[Section 4.3]{BW2}.

Let $\omega\in \Omega^{\an, n+1}_{X,0}$ and 
\begin{equation}\label{s_0}
  s_0:H''_{0}\to\mathcal{G}_0\text{ by }s_0([\omega]_B) := \left( t \mapsto \left[ \frac{\omega}{df}|_{X_t}\right] \in H^n(X_t; \mathbb{C}) :  0 < |t| \ll 1\right)_0
  \end{equation}
where $[\omega]_B$ is the class in a Brieskorn lattice $H''_{0}:=\Omega^{\an, n+1}_{X,0}/df \wedge d \Omega^{\an, n-1}_{X,0}$ and the $\frac{\omega}{df}$ is the Gelfand-Leray form of $\omega$ (see \cite[Section 7.1]{AGZV}). Let $V^{>\alpha}:=\sum_{\beta>\alpha}\mathbb{C}\{t\}C_\kb$. We note that $s_0$ is a well-defined injective map, the image of $s_0: H_0'' \to \mathcal{G}_0$ is contained in $V^{>-1}$, and the map $s_0: H_0'' \to V^{>-1}$ is $\CC\{t\}\langle \partial_t^{-1} \rangle$-linear; see \cite[Lemma 4.7]{BW2}.

\subsection{Polarized mixed Hodge structure on $H^n(X_\infty; \QQ)$ }\label{PMHS}
We describe a polarized mixed Hodge structure of level $n$ and level $n+1$ on $H^n(X_\infty; \mathbb{Z})$ as follows; see \cite{Hertling} and
\cite[Remarks 10.25]{Hertlingbook}. Take $N=-\log M_u$ where $M_u$ is the unipotent part of the monodromy operator $M$. It is an endomorphism of $H^n(X_\infty; \Q)$ such that $N^{n+2} = 0$ by the monodromy theorem \cite{Hertling,Kulikov}. The filtration $F^\bullet$ on $H^n(X_\infty; \mathbb{C})_\lambda$ is given by
 \begin{equation}\label{pmhsst}
   F^q H^n(X_\infty; \mathbb{C})_\lambda := \psi^{-1}_\alpha \left( \frac{V^\alpha \cap \partial_t^{n-q} s_0(H_0'') + V^{>\alpha}}{V^{>\alpha}}  \right)\text{ where }V^{\alpha}:=\sum_{\beta\geq\alpha}\mathbb{C}\{t\}C_\kb.
\end{equation}

For the Milnor fibration $f:X\to T$, the intersection form and the long exact sequence associated with relative homology groups induce a canonical isomorphism
\begin{equation}
H^n(X_t;\mathbb{Z})\cong H_n(X_t,\paa X_t;\mathbb{Z})\text{ for }t\ne0.
\end{equation}
A variation operator, which is an isomorphism \cite{Hertling, He3}, is defined by
\begin{equation}
  \Var: H^n(X_t;\mathbb{Z})\to H_n(X_t;\mathbb{Z})\text{ by }
  \Var([\gamma]):= [M(\gamma)-\gamma]
\end{equation}
where $\gamma$ is a representative of the cycle 
$[\gamma]\in H_n(X_t,\partial X_t;\mathbb{Z})$.
Noting that $X_\infty$ is homotopy equivalent to $X_t$ for $t\ne0$, we may identify $H_n(X_\infty;\mathbb{Z})$ with $H_n(X_t;\mathbb{Z})$.
It induces a variation operator
\begin{equation}
  \Var: H^n(X_\infty;\mathbb{Z})\to H_n(X_\infty;\mathbb{Z}).
\end{equation}
Hertling's polarization $S: H^n(X_\infty;\mathbb{Q})\times H^n(X_\infty;\mathbb{Q})\to \mathbb{Q}$ is defined by
\begin{equation}\label{2.36}
S(a,b): = (-1)^{n(n-1)/2}\cdot \langle a, \Var\circ \nu(b)\rangle\text{ where }
\end{equation}
\begin{align}
\nu &= (M-\id)^{-1}  &\hbox{ on } 
H^n(X_\infty;\mathbb{Q})_{\neq 1};&\text{ level }n,\\
\nu &= \sum_{l\geq 1}\frac{1}{l}(-1)^{l}(M-\id)^{l-1}:=\frac{-N}{M-\id} &\hbox{ on } H^n(X_\infty;\mathbb{Q})_{1};&\text{ level }n+1
\end{align}
where $\langle-,-\rangle$ denotes the canonical bilinear form between
$H^n(X_\infty;\mathbb{Q})$ and $H_n(X_\infty;\mathbb{Q})$. It is known that $S$ is a polarization on the mixed Hodge structure on $H^n(X_\infty;\mathbb{Q})$ (see \cite[Sections 3 and 4]{Hertling} for details).
\begin{Rmk}\label{remark1}
  As a generalization of a relation between the primitive cohomology and the intersection pairing in classical Hodge theory,
  one has the following property in a polarized mixed Hodge structure; let $m$ be an even integer and suppose $(H,N,F^\bullet,S)$ is a polarized mixed Hodge structure of level $m$. The restriction of the polarization $S$ to a subspace $(\Ker N : H_\mathbb{C}\to H_\mathbb{C})\cap F^{\frac{m}{2}}H_\mathbb{C}$ is positive definite (see \cite[Lemma A.6]{BW2}).
\end{Rmk}

\subsection{Higher residue pairings}\label{HRpair}

\subsubsection{K. Saito's higher residue pairing}\label{Saitono}
On a Brieskorn lattice $H''_{0}:=\Omega^{\an, n+1}_{X,0}/df \wedge d \Omega^{\an, n-1}_{X,0}$, there is, so-called, Saito's higher residue pairing (see \cite{Hertlingbook,He3} for details) 
$$\begin{aligned}
  \sK&:H''_{0}\times H''_{0}\to \mathbb{C}[\![\partial_{t}^{-1}]\!]\partial_{t}^{-n-1}\\
  \sK(\omega_1,\omega_2)&:=\sum_{m\geq0}\sK^{(-m)}(\omega_1,\omega_2)\partial_{t}^{-n-1-m}\text{ such that}
\end{aligned}$$
\begin{enumerate}[label=\arabic*)]
\item $\sK^{(-m)}(\omega_1,\omega_2)\in\mathbb{C}$ is $\mathbb{C}$-linear and $(-1)^m$-symmetric;
  \item $\sK^{(0)}(\partial_{t}^{-1}\omega_1,\omega_2)=\sK^{(0)}(\omega_1,\partial_{t}^{-1}\omega_2)=0$ and
$$\sK^{(-m)}(\omega_1,\omega_2)=\sK^{(-m-1)}(\partial_{t}^{-1}\omega_1,\omega_2)=-\sK^{(-m-1)}(\omega_1,\partial_{t}^{-1}\omega_2);$$
    \item   
      $$\sK^{(0)}([\psi_1dx_0\cdots dx_{n}]_B,[\psi_2dx_0\cdots dx_{n}]_B)
      :=\rGres\left[  \frac{\psi_1\psi_2dx_0 \cdots dx_{n}} {f_0, \dots, f_{n}} \right]$$
    where $f_i:=\frac{\partial f}{\partial x_i}$ and $\rGres$ is Grothendieck's residue;
\item $(m-1)\sK^{(-m+1)}(\omega_1,\omega_2)=\sK^{(-m)}(t\omega_1,\omega_2)-\sK^{(-m)}(\omega_1,t\omega_2)$.
\end{enumerate}
We will denote $\rGres\left[  \frac{\psi_1\psi_2dx_0 \cdots dx_{n}} {f_0, \dots, f_{n}} \right]$ by $\GGres(\psi_1dx_0\cdots dx_{n},\psi_2dx_0\cdots dx_{n})$ in \S~\ref{mainproof}.

\subsubsection{Higher residue pairing associated with Hertling's polarization $S$}
On the other hand, on $V^{>-1}=\bigoplus_{-1<\ka\leq 0}\mathbb{C}\{t\}C_\ka$  there is another higher residue pairing associated with the polarization $S$ in the polarized mixed Hodge structure by the following properties (see \cite{Hertling, He3} for details); letting  $-1<\alpha,\beta\leq 0$ and $a\in C_\alpha,b\in C_\beta$,
$$\begin{aligned}
  \sP:V^{>-1}\times V^{>-1}&\to\mathbb{C}\{\!\{\partial_{t}^{-1}\}\!\}\partial_{t}^{-1}\text{ where } \mathbb{C}\{\!\{\partial_{t}^{-1}\}\!\}:=\Big\{\sum_{i\geq0}a_i\partial_{t}^{-i}\mid \sum_{i\geq0}\frac{a_it^i}{i!}\in\mathbb{C}\{t\}\Big\}\\
  \sP(a,b)&:=\sum_{m\geq1}\sP^{(-m)}(a,b)\partial_{t}^{-m}\text{ where }\sP^{(-m)}(a,b)\in\mathbb{C};
\end{aligned}$$
\begin{enumerate}[label=\arabic*)]
  \item $\sP(a,b)=0$ if $\alpha+\beta\not\in\mathbb{Z}$;
  \item for  $\alpha+\beta=-1$ 
$$\sP(a,b)=\frac{1}{(2\pi i)^{n}}S(\psi^{-1}_{\alpha}(a),\psi^{-1}_{\beta}(b))\partial_{t}^{-1};$$
\item  for  $\alpha=\beta=0$,
  \begin{equation}\label{2_2_3}
    \sP(a,b)=\frac{-1}{(2\pi i)^{n+1}}S(\psi^{-1}_{\alpha}(a),\psi^{-1}_{\beta}(b))\partial_{t}^{-2};
  \end{equation}
\item for $g_1(\partial_{t}^{-1}), g_2(\partial_{t}^{-1})\in \mathbb{C}\{\!\{\partial_{t}^{-1}\}\!\}$,
  \begin{equation}\label{2_2_4}
    \sP(g_1(\partial_{t}^{-1})a,g_2(\partial_{t}^{-1})b)=g_1(\partial_{t}^{-1})g_2(-\partial_{t}^{-1})\sP(a,b).
    \end{equation}
\end{enumerate}

The following proposition would justify the term ``residue'' in the name of $\sP$.
\begin{proposition}\label{prop_1} \cite{BW2,Hertling,He3}
  \begin{align}
    \sP^{(-n-1)}(s_0(\omega_1),s_0(\omega_2))&=\sK^{(0)}(\omega_1,\omega_2)\\
    \sP^{(-m)}(s_0(\omega_1),s_0(\omega_2))&=0\text{ for }1\leq m\leq n.
    \end{align}
 \end{proposition}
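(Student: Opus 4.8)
The plan is to view both sides as $\CC\{\!\{\partial_t^{-1}\}\!\}$-sesquilinear pairings on the Brieskorn lattice $H_0''$ and to compare them degree by degree in $\partial_t^{-1}$. Write $P:=\sP\circ(s_0\times s_0)$. Since $s_0$ is $\CC\{t\}\langle\partial_t^{-1}\rangle$-linear, in particular $s_0(\partial_t^{-1}\omega)=\partial_t^{-1}s_0(\omega)$, property 4) of $\sP$ yields $P(\partial_t^{-1}\omega_1,\omega_2)=\partial_t^{-1}P(\omega_1,\omega_2)=-P(\omega_1,\partial_t^{-1}\omega_2)$, which is the sesquilinearity in property 2) of $\sK$; and the symmetry of $S$ together with the sign rule in property 4) of $\sP$ makes $P^{(-m)}$ $(-1)^m$-symmetric, i.e.\ property 1) of $\sK$. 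Thus $P$ and $\sK$ are pairings of $H_0''$ of the same formal type, and the proposition asserts that $P$ starts in degree $n+1$ in $\partial_t^{-1}$, with degree-$(n+1)$ coefficient the Grothendieck residue pairing.

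The degree bound $P^{(-m)}=0$ for $1\le m\le n$ is where Steenbrink's polarized mixed Hodge structure is used. Decomposing $s_0(\omega_i)$ along the monodromy eigenvalues and along $V^{>-1}=\bigoplus_{-1<\beta\le 0}\CC\{\!\{\partial_t^{-1}\}\!\}\,C_\beta$, and using property 4) of $\sP$, one reduces to estimating $\sP(\partial_t^{-k}v,\partial_t^{-k'}v')=(-1)^{k'}\,\partial_t^{-k-k'}\,\sP(v,v')$ for $v\in C_\beta$ and $v'\in C_{\beta'}$ arising as $V$-symbols of $s_0(H_0'')$ in $\mathrm{gr}_V^{\beta+k}$ and $\mathrm{gr}_V^{\beta'+k'}$. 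By properties 1)--3) of $\sP$, $\sP(v,v')$ vanishes unless the corresponding eigenvalues are mutually inverse, in which case it is a nonzero multiple of $S\bigl(\psi_\beta^{-1}(v),\psi_{\beta'}^{-1}(v')\bigr)\,\partial_t^{-\varepsilon}$ with $\varepsilon=1$ (eigenvalue $\neq 1$, so $\beta+\beta'=-1$) or $\varepsilon=2$ (eigenvalue $1$, so $\beta=\beta'=0$); hence the order of $\sP(\partial_t^{-k}v,\partial_t^{-k'}v')$ in $\partial_t^{-1}$ equals $(\beta+k)+(\beta'+k')+2$. On the other hand, \eqref{pmhsst} places $\psi_\beta^{-1}(v)$ in $F^{\,n-k}$ and $\psi_{\beta'}^{-1}(v')$ in $F^{\,n-k'}$ of the respective eigenspaces, which are paired by $S$; because the monodromy weight filtration on $H^n(X_\infty)$ is centered at $n$ on the eigenvalue-$\neq1$ part and at $n+1$ on the eigenvalue-$1$ part, the $F$-orthogonality of the polarization forces the above $S$-value to vanish unless $(n-k)+(n-k')\le n$, respectively $\le n+1$. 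In either case $(\beta+k)+(\beta'+k')+2\ge n+1$, so $P(\omega_1,\omega_2)\in\partial_t^{-n-1}\CC\{\!\{\partial_t^{-1}\}\!\}$. (Concretely, this is the compatibility, in Hertling's correspondence between polarized mixed Hodge structures and Brieskorn lattices, between the polarization and the degree of $\sK$.)

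It remains to identify $P^{(-n-1)}(\omega_1,\omega_2)$ with $\sK^{(0)}(\omega_1,\omega_2)$, which for $\omega_i=\psi_i\,dx_0\cdots dx_n$ equals $\rGres\!\left[\frac{\psi_1\psi_2\,dx_0\cdots dx_n}{f_0,\dots,f_n}\right]$ by property 3) of $\sK$. By the computation above, only the terms for which $(n-k)+(n-k')$ equals the level contribute, and there $P^{(-n-1)}(\omega_1,\omega_2)$ is a sum of values of $S$ on top graded pieces of $V^\bullet$, i.e.\ on primitive Hodge classes of the vanishing cohomology. So the identity to be proved is precisely the statement that, transported through $\psi_\bullet$ and $s_0$, the topologically defined polarization $S$, built from the variation operator $\Var$ and the canonical pairing between $H^n(X_\infty)$ and $H_n(X_\infty)$, matches the algebraic Grothendieck residue pairing on the Milnor algebra, up to the normalizing constants in properties 2) and 3) of $\sP$. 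I would deduce this from the comparison of the topological (Seifert/variation) pairing with the residue pairing, due to Hertling and He following K.~Saito and Varchenko; equivalently, one can verify axiom 4) of $\sK$ for $P$ from the flatness of $S$ with respect to the Gauss--Manin connection and appeal to the uniqueness of the higher residue pairing, obtaining $P=\sK$, but pinning down the normalizing constant still requires the same topological-to-algebraic comparison. This last step is the real obstacle; the formal properties and the degree bound are bookkeeping with the $V$-filtration and the $F$-orthogonality of $S$.
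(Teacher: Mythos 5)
The paper offers no proof of this proposition---it is quoted from Hertling's papers and from Brown--Walker---so there is no internal argument to measure yours against; what can be said is that your sketch is a faithful reconstruction of how the cited proof goes, and that the division of labour you identify is the right one. The vanishing of $\sP^{(-m)}(s_0(\omega_1),s_0(\omega_2))$ for $1\le m\le n$ is indeed bookkeeping: decompose $s_0(\omega_i)$ along the $C_\alpha$, strip off powers of $\partial_t^{-1}$ via property 4) of $\sP$, and combine \eqref{pmhsst} with the orthogonality $S(F^p,F^q)=0$ for $p+q$ exceeding the weight ($n$ on the eigenvalue-$\neq1$ part, $n+1$ on the eigenvalue-$1$ part); your arithmetic ($k+k'+1\ge n+1$, resp.\ $k+k'+2\ge n+1$) is correct. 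One step needs more care than your phrasing suggests: a $C_{\beta+k}$-component of $s_0(\omega)$ does not directly exhibit an element of $V^{\beta}\cap\partial_t^{k}s_0(H_0'')$, because $\partial_t^{k}s_0(\omega)$ need not lie in $V^{\beta}$; one must pass to the filtration induced on $\mathrm{gr}_V$, which is what \eqref{pmhsst} actually encodes---you gesture at this with ``$V$-symbols,'' so it is a presentational rather than a logical gap. Finally, you are right that the identity $\sP^{(-n-1)}\circ(s_0\times s_0)=\sK^{(0)}$, i.e.\ the matching of the topologically defined polarization (variation operator and canonical pairing) with Grothendieck's residue including the constants of \eqref{2_2_3}, is the genuine content: it cannot be obtained by formal manipulation or by the uniqueness of $\sK$ alone, and must be imported from Hertling's comparison theorem (after K.~Saito and Varchenko). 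That is precisely what the paper's citation covers, so your proposal is complete modulo the same external input the authors themselves rely on.
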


\section{$G$-equivariant theory}\label{GGe}

\subsection{Milnor fibration in the $G$-equivariant setting}\label{milequi}
Let $G$ be a finite group acting linearly on $Q:=\mathbb{C}[x_0,\dots,x_n]$ as $\mathbb{C}$-algebra automorphisms of $Q$, $I_g$ be the ideal of $Q$ generated by $q - g q $ for all $q \in Q$.
Denote  $Q^g := Q / I_g$ and $f_g := f |_{Q^g}$ for $f\in Q$. Up to a change of variables, every action looks like the followings;
\begin{equation}\label{action1}
  g\cdot (x_0,\dots,x_n)=(l_0,\dots,l_{k_g},x_{k_g+1},\dots,x_{n})
  \end{equation}
where $l_0,\dots,l_{k_g}$ are linear forms in $x_0,\dots,x_{k_g}$ such that $l_0-x_0,\ldots,l_{k_g}-x_{k_g}$ are linearly independent. Thus $Q^g=Q/(x_0,\ldots,x_{k_g})$ and
$f_g(x_{k_g+1},\dots,x_{n})=f|_{x_0=\cdots=x_{k_g}=0}$.

Assume that $f\in Q=\mathbb{C}[x_0, \dots, x_n]$ is $G$-invariant and  the only singularity of the associated morphism $f: \Spec(Q) \to \A^1_\mathbb{C}$  of smooth affine varieties
is at $\fm=(x_0,\dots,x_n)$. Let $U^g=\Spec (Q^g)$ denote the $g$-fixed locus of $U=\Spec(Q)$ for $g\in G$ as smooth affine varieties. Note for $n_g+1:=\dim Q^g$, one has $U^g\cong\mathbb{C}^{n_g+1}$. We have a $G$-action on 
the disjoint union $\coprod _{g \in G} U^g$ by
\[ U^g \to U^{hgh^{-1}}, \ x \mapsto h x  \text{ for } h \in G .  \]
Let $X^g:=U^g\cap X$ where $X$ is as in \S~\ref{mil_fib}. Then
\[ X^g \to X^{hgh^{-1}}, \ x \mapsto h x  \text{ for } h \in G .  \]
$f_g: X^{g} \to T$ denotes the map induced by the restriction of $f:X\to T$ by abuse of notation.
Note that if necessary, shrinking the disk $T$ sufficiently, for all $g\in G$ we may assume that $f_g: X^{g} \to T$ is surjective  by the open mapping theorem unless $\dim_\mathbb{C}X^g=0$. We also let
$f_g' : (X^g)' \to T'$ where $(X^g)' := X^g\setminus f_{g}^{-1}(0)$ and $X^g_\infty:=(X^g)'\times_{T'}T_\infty$.

\subsection{$G$-equivariant matrix factorizations}\label{gmat}
Let $Q$ be $\mathbb{C}[x_0,\dots,x_n]$, $G$ be a finite group acting on $Q$ as automorphisms of $Q$, and $f\in Q$ be $G$-invariant. We define the $\mathbb{Z}/2$-graded dg-category $\MF_G(Q,f)$ of $G$-equivariant matrix factorizations as follows:
\begin{definition}
  A  $G$-equivariant matrix factorization of a potential $f$ over $Q$ is a 
pair
\begin{equation}
(E,\delta_E)=\xymatrix{(E^0 \ar@/^/[r]^{\delta_0} & E^1\ar@/^/[l]^{\delta_1})}\text{ where }
\end{equation}
\begin{itemize}
\item  $E=E^0\oplus E^1$ is a $\mathbb{Z}/2$-graded finitely generated projective $Q$-module equipped with
a compatible $G$-action, and
\item $\delta_E\in \End^1_Q(E)$ is an odd $G$-equivariant (i.e., of degree $1 \in \mathbb{Z}/2$)
endomorphism of $E$ such that $\delta_E^2=f\cdot \id_E$.  
\end{itemize}
 Morphisms between $G$-equivariant matrix factorizations  $\bar{E}:=(E,\delta_E)$ and $\bar{F}:=(F,\delta_F)$ should also be compatible with the action of $G$, so 
\begin{equation}\label{G-hom-eq}
  \Homb_{\MF_G(Q,f)}(\bar{E},\bar{F})=\Homb_{\MF(Q,f)}(\bar{E},\bar{F})^G
  \end{equation}
where $\MF(Q,f)$ is the category of matrix factorizations of a potential $f$ over $Q$ forgetting the $G$-equivariant structure.
\end{definition}
For instance, if $E^0$ and $E^1$ are free $G$-equivariant $Q^g$-modules with chosen bases over a fixed locus $U^g$, 
the differential $\delta_E$ can be represented by
a block matrix
\begin{align}
  \delta_E= \begin{pmatrix}0&D^1\\D^0&0\end{pmatrix}&\text{ such that }  D^1D^0= D^0D^1= f_g\cdot\mathbf{I}\text{ with}\label{nota3}\\
    \begin{pmatrix}0&D^1\\D^0&0\end{pmatrix} \begin{pmatrix}g^0&0\\0&g^1\end{pmatrix} &=
        \begin{pmatrix}g^0&0\\0&g^1\end{pmatrix} \begin{pmatrix}0&D^1\\D^0&0\end{pmatrix}\text{ where }g:=\begin{pmatrix}g^0&0\\0&g^1\end{pmatrix}.\label{equivariance}
\end{align}
In this case, we let
\begin{equation}\label{nota}
  (E,\delta_E):=(D^1,D^0).
  \end{equation}
\subsection{The Grothendieck group}\label{grogp}
Note that $K_0(\MF(Q,f)) \cong K_0^\Delta([\MF(Q,f)]^{\idem})$ where  $[\MF(Q,f)]$ is the the homotopy category of $\MF(Q,f)$ and $[\MF(Q,f)]^{\idem}$ is the idempotent completion; see \cite[Section 2.1]{BW2} for details. Moreover, if the only singularity of the associated morphism $f: \Spec(Q) \to \A^1_\mathbb{C}$  of smooth affine varieties
is at $\fm$, then $K_0(\MF(Q,f))\cong K_0^{\Delta}([\MF(\widehat{Q}, f)])$ where $\widehat{Q}$ denotes the $\fm$-adic completion of $Q$ (see \cite[Theorem 5.7]{dyckerhoff}). Furthermore, from \cite[Lemma 5.9]{BW2}, it is known that 
\begin{equation}\label{kgroup}
  K_0(\MF(Q, f)) \cong K_0^{\Delta}([\MF(Q^\hen,f)])
\end{equation}
where $Q^\hen$ is the henselization of $Q$ at $\fm$. From \eqref{kgroup}, we identify a class in  $K_0(\MF(Q, f))$ with the corresponding class in $K_0^{\Delta}([\MF(Q^\hen,f)])$. That is,  since $Q^\hen$ is a local ring, any class in $K_0(\MF(Q, f))$ may be represented by a free matrix factorization in $\MF(Q^\hen, f)$ and denoted by $(A,B)$ as in \eqref{nota}.

The action of $G$ on $Q$ is naturally extended to $R:=\mathbb{C}[\![x_0,\dots,x_n]\!]$ and its henselization $Q^\hen\subseteq \mathbb{C}[\![x_0,\dots,x_n]\!]$. $K_0(\MF_G(Q, f))$ is similarly given, i.e.,
  $$K_0(\MF_G(Q,f)) \cong K_0^\Delta([\MF_G(Q,f)]^{\idem}).$$
Since $K_0^\Delta([\MF_G(Q,f)]^{\idem})\cong K_0^{\Delta}([\MF_G(Q^\hen,f)])$ for a nontrivial finite group $G$; see \cite{BWidem} for details, in this paper we will merely work on  $K_0^{\Delta}([\MF_G(Q^\hen,f)])$, i.e., as in \eqref{nota} its class is represented by $(A,B)$ with entries in $Q^\hen$ and the equivariant relation \eqref{equivariance} is satisfied over $(Q^\hen)^g$ for each $g\in G$.

\subsection{Chern-character-type map $\ch_{X_\infty}$}\label{chentype}
For the Milnor fibration $f:X\to T$  as in \S~\ref{mil_fib} with $\dim_\mathbb{C}X=n+1$, assume that $n$ is odd and $p:=\frac{n+1}{2}$.
From \cite[Proposition 5.3]{BW2}, there is a well-defined Chern-character-type map $\ch_{X_\infty}:K_0(\MF(Q, f))\to H^{n}(X_\infty; \Q)_1$ defined by
\begin{equation}\label{chxh}
  \ch_{X_\infty}(A,B):=\left[\frac{-1}{(2 \pi i)^p}\frac{(p-1)!}{n!} \tr_{Q^\hen}(A^{-1} dA (dA^{-1} dA)^{p-1})\right]
  \end{equation}
for  $(A,B) \in K_0(\MF(Q,f))$.  From \cite[page 20 and 25]{BW2} we note that  $Q^\hen\subseteq\mathcal{O}_{X,0}^{\an}$ and choosing a lift of $(A,B) \in \MF(Q^\hen,f)$ to an object of $\MF(\widetilde{Q}, f)$ for some \'{e}tale extension $Q \subseteq \widetilde{Q}\subseteq Q^\hen$ and assuming $X$ is sufficiently small so that every element of $\widetilde{Q}$ converges absolutely on $X$ we may interpret $A$ and $B$ as matrices with entries in $\Gamma(X, \mathcal{O}_X^{an})$, i.e., $\tr_{Q^\hen}(A^{-1} dA (dA^{-1} dA)^{p-1})\in \Gamma(X', \Omega_X^{\an, n})$. Consequently, $\ch_{X_\infty}(A,B)$ is regarded as a class in $H^{n}(X_\infty; \Q)_1$ after composing the canonical map
\begin{equation}\label{labelDe}
  H^{n}_{deR}(X';\mathbb{C})\cong H^{n}(X'; \CC) \overset{\ccan}{\to} H^{n}(X_\infty; \CC)_1.
  \end{equation}
Additional details can be found in \cite[Section 4]{Hertling} and \cite[Section 5]{BW2}.

We  extend \eqref{chxh} to the equivariant setting. Assume $\dim_\mathbb{C}X^g:=n_g+1$ where $n_g$ is odd and $p_g:=\frac{n_g+1}{2}$. For a $G$-equivariant free matrix factorization $(A,B)$ over $Q^\hen$ and each $g\in G$, we may define
\begin{equation}\label{chxhh}
  \chxh(A,B):=\left[\frac{-1}{(2 \pi i)^{p_g}}\frac{(p_g-1)!}{n_g!} \tr_{(Q^\hen)^g}(A_g^{-1} dA_g (dA_g^{-1} dA_g)^{p_g-1}\circ g^1)\right]
  \end{equation}
where $A|_{(Q^\hen)^g}:=A_g$. Via the canonical restriction map from $K_0^{\Delta}([\MF_G(Q^\hen,f)])$ to each $K_0^{\Delta}([\MF((Q^\hen)^g,f_g)])$, we apply the same construction as in \cite[Section 5]{BW2} to each $K_0^{\Delta}([\MF((Q^\hen)^g,f_g)])$. Then we have a well-defined map
$$\chxh:K_0^{\Delta}([\MF_G(Q^\hen,f)])\to H^{n_g}(X^{g}_{\infty}; \mathbb{C})_1.$$ 
In passing, we observe that since the values of a character of a finite group are algebraic integers, the class $\chxh(A,B)$ is in $H^{n_g}(X^{g}_{\infty}; \mathbb{C})_1$. When $n_g$ is even or $\dim_\mathbb{C}X^g=0$, then we define $\chxh(A,B):=0$.

\subsection{$G$-equivariant Hochschild Chern character $\chhh$}\label{chench}
Let $f\in R=\mathbb{C}[\![x_0,\dots,x_n]\!]$ have an isolated singularity, i.e., $\dim_\mathbb{C}\mathbb{C}[\![x_0,\dots,x_n]\!]/(\partial_{x_0} f,\ldots,\partial_{x_n} f)<\infty$, and be invariant under a finite group $G$. Recalling the action of $g$ as in \eqref{action1}, we let
$$ H(f_g):=\frac{\Omega^{n_g+1}_{R^g/\CC}}{(-df_g)\wedge\Omega^{n_g}_{R^g/\CC}}\text{ for }R^g\cong \mathbb{C}[\![y_1,\dots,y_{n_g+1}]\!].$$
Then we have
\begin{equation}\label{G-hoch}
HH_*(\MF_G(R,f)) \cong (\bigoplus_{g\in G} H(f_g))^G\text{ (see \cite[Theorem~2.5.4]{PV: HRR})}
\end{equation}
where the right-hand side means the invariant part with respect to the action of $h\in G$ on $\bigoplus_{g\in G} H(f_g)$ by sending
$H(f_g)$ to $H(f_{hgh^{-1}})$. We remark that in \cite{BW,CKK} one can find the definition of a Hochschild homology $HH_\ast$ of a differential graded category and its identification in \eqref{G-hoch}, a so-called Hochschild-Kostant-Rosenberg type isomorphism. According to Polishchuk and Vaintrob \cite{PV: HRR}, the $G$-equivariant Chern character is characterized explicitly as follows:

\begin{proposition}\label{G-hoch-eq}{\em (\cite[Theorem~3.3.3]{PV: HRR}) }
For a $G$-equivariant matrix factorization $\bar{E}=(E,\delta_E)$ the $G$-equivariant Chern character  $\chhh(\bar{E})$ is given by
\begin{align}
  \chhh(\bar{E})&:=\bigoplus _{g \in G} \chhh(\bar{E})_g \in(\bigoplus_{g\in G} H(f_g))^G\label{G11}\text{ where }\\
\chhh(\bar{E})_g&:=\frac{(-1)^{n_g+1}\sTr_{R^g}((d\delta_E|_{R^g})^{n_g+1}\circ g)}{(n_g+1)!} \mod \JJ_{f_g}.\label{G-ch-eq}
\end{align}
Note that $\JJ_{f_g}$ is the Jacobi ideal generated by $df_g$ and $\sTr$ is a supertrace which is the difference of the traces on $E^0$ and $E^1$.
\end{proposition}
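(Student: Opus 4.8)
The plan is to identify $\chhh(\bar E)$ with the image of the identity endomorphism $\id_{\bar E}$ under the categorical trace map $\End_{\MF_G(R,f)}(\bar E)\to HH_*(\MF_G(R,f))$, and then transport this class through the two identifications already set up above: the twisted-sector decomposition \eqref{G-hoch} and the Hochschild--Kostant--Rosenberg description of each summand $H(f_g)$. Concretely, I would first treat the non-equivariant building blocks. For a single matrix factorization $(E,\delta_E)$ over $R^g$ with potential $f_g$, the Chern character in $HH_*(\MF(R^g,f_g))\cong H(f_g)$ is computed by resolving the diagonal bimodule by a Koszul-type complex built from $\delta_E$; the induced map on Hochschild chains sends $\id_{\bar E}$ to the supertrace of a power of $d\delta_E$, and the HKR antisymmetrization produces exactly the normalization $1/(n_g+1)!$ together with the sign $(-1)^{n_g+1}$. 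This is the content of the non-equivariant half of \cite[\S 3]{PV: HRR}, which I would quote rather than rederive.

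Second, I would reintroduce the group action. The equivariant Hochschild complex of $\MF_G(R,f)$ decomposes, after passing to $G$-invariants, into the $g$-twisted pieces governed by $\MF(R^g,f_g)$ as in \cite[Theorem~2.5.4]{PV: HRR}; under this decomposition the trace of $\id_{\bar E}$ in the $g$-sector is the $g$-twisted trace, i.e.\ one inserts the action of $g$ on $E$ before taking the supertrace. Restricting all tensor factors to the fixed locus $U^g$ (equivalently, working over $R^g$) and repeating the Koszul computation with the extra insertion of $g^1$ on $E^1$ then yields
$\chhh(\bar E)_g = (-1)^{n_g+1}\sTr_{R^g}((d\delta_E)^{n_g+1}\circ g)/(n_g+1)!$
modulo the Jacobi ideal $\JJ_{f_g}$, and assembling over $g$ gives \eqref{G11}. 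The compatibility of the categorical Chern character with the direct-sum decomposition \eqref{G-hoch} makes \eqref{G11} formal once the individual sectors are known.

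The main obstacle is the middle step: showing that the abstract categorical Chern character, defined via an arbitrary projective resolution of the diagonal bimodule, coincides with the explicit differential-form expression, and in particular that the latter is independent of the chosen free representative $(A,B)$ of the class and lands correctly in $H(f_g)=\Omega^{n_g+1}_{R^g/\CC}/(df_g)\wedge\Omega^{n_g}_{R^g/\CC}$. This requires constructing an explicit homotopy between the chosen resolution and the canonical Koszul resolution, tracking the Clifford/curvature contributions of $\delta_E$, and carefully matching PV's sign and normalization conventions; the invariance modulo $\JJ_{f_g}$ is precisely where the relation $\delta_E^2=f_g\cdot\id_E$ enters. Since all of this is carried out in \cite[Theorem~3.3.3]{PV: HRR}, I would present the statement as a direct citation and include only the above sketch to indicate how it fits the present framework.
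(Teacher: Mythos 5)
Your proposal is correct and matches the paper's treatment: the paper states this proposition purely as a citation of \cite[Theorem~3.3.3]{PV: HRR} and offers no proof of its own, which is exactly what you conclude by deferring the hard middle step (comparing the categorical Chern character with the explicit supertrace formula) to Polishchuk--Vaintrob. Your sketch of the twisted-sector decomposition and the Koszul-resolution computation is a reasonable summary of their argument, so no further work is needed here.
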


\subsection{$G$-equivariant negative cyclic Chern character $\chhn$}\label{chenhn}

Assume \ref{enun1} and \ref{enun2} in \S~\ref{mil_fib}. Then we have a generalization of \eqref{G-hoch} in the setting of \S~\ref{milequi} as follows: From \cite[Theorem 1.1]{Bar} (see also \cite[Theorem~6.5]{CKK}), we have
\begin{align}
  HN_*(\MF_G(Q,f)) \cong (\bigoplus_{g\in G} N(f_g))^G&\text{ where }\label{G-hocn1}\\
  N(f_g):=\frac{\Omega^{n_g+1}_{Q^g/\CC}[\![u]\!]}{(ud-df_g\wedge)\Omega^{n_g}_{Q^g/\CC}[\![u]\!]}&\text{ unless }Q^g=\CC\label{G-hocn2}
\end{align}
and the right-hand side means the invariant part with respect to the action of $h\in G$ on $\bigoplus_{g\in G} N(f_g)$ by sending
$N(f_g)$ to $N(f_{hgh^{-1}})$. We remark that in \cite{BW,CKK} one can find the definition of a negative cyclic homology $HN_\ast$ and its identification in \eqref{G-hocn1}. For a $G$-equivariant matrix factorization $\bar{E}=(E,\delta_E)$, after composing the Hochschild-Kostant-Rosenberg isomorphism \eqref{G-hocn1} the {\em categorical} $G$-equivariant negative cyclic homology valued Chern character $\Chhn(\bar{E})$ (see \cite{BW,CKK,shkl} for the definition)  becomes
$$\chhn(\bar{E}):=\bigoplus _{g \in G} \chhn(\bar{E})_g\in  (\bigoplus_{g\in G} N(f_g))^G.$$
In particular, if $n_g$ is even, then  $\chhn(\bar{E})_g=0$. Moreover, from \cite[Proposition 7.1]{BW2} (see also \cite{schul}) we have that
\begin{equation}\label{chn3}
  N(f_g) \cong\frac{\widehat{\Omega}^{n_g+1}_{g}[\![u]\!]}{(ud-df_g\wedge)\widehat{\Omega}^{n_g}_{g}[\![u]\!]} \cong\frac{\widehat{\Omega}^{n_g+1}_{g}}{df_g\wedge d\widehat{\Omega}^{n_g-1}_{g}}
\end{equation}
where $\widehat{\Omega}^\bullet_{g}:=\Omega^{\bullet}_{Q^g/\CC}\otimes_{Q^g}\widehat{Q^g}$ and $\widehat{Q^g}$ is the completion of $Q^g$ at $\fm$.

\subsection{Classifying stack $BG$ and character $\chg$}\label{chenclass}
Consider a classifying stack $BG := [\Spec\, \CC /G ]$ of a finite group $G$ over a point. One may identify $K_0(BG)$ with the space $\mathsf{R}(G)$ of finite dimensional complex linear representations of $G$  and define a character map 
\begin{equation}\label{ch for BG}
  \begin{aligned}
    \chg: K_0 (BG ) \ot \CC \cong \mathsf{R} (G) \otimes \CC &\to \bigoplus _{[g]\in G/G}  \CC \\
        [\rho] &\mapsto \bigoplus _{[g] \in G/G} \mathrm{tr}_\CC (\rho(g))
  \end{aligned}
\end{equation}
where $G/G$ denotes the set of all conjugacy classes of $G$. It is an isomorphism and gives a special case of \eqref{G11}.
We let $\chg(\rho)_g:=\mathrm{tr}_\CC(\rho(g))$.

\section{Proof of the main theorem}\label{mainproof}
Throughout this section,  assume \ref{enun1} and \ref{enun2} in \S~\ref{mil_fib} and that $n_g$ is odd. For a $G$-equivariant free matrix factorization $(A,B)$ over $Q^\hen$, we let
$$\chpv(A,B)_g:=\frac{2\tr_{(Q^\hen)^g}((dA_gdB_g)^{p_g}\circ  g^0)}{(n_g+1)!}\text{ where } p_g:=\frac{n_g+1}{2}.$$
We note that PV stands for Polishchuk and Vaintrob. After composing $\frac{\Omega^{\an, n_g+1}_{X^g,0}}{(-df_g) \wedge \Omega^{\an, n_g}_{X^g,0}}\to \frac{\widehat{\Omega}^{n_g+1}_{g}}{(-df_g)\wedge \widehat{\Omega}^{n_g}_{g}}$ with  $\frac{\widehat{\Omega}^{n_g+1}_{g}}{(-df_g)\wedge \widehat{\Omega}^{n_g}_{g}} \cong H(f_g):=\frac{\Omega^{n_g+1}_{Q^g/\CC}}{(-df_g)\wedge\Omega^{n_g}_{Q^g/\CC}}$, we have
\begin{equation}\label{brie2}
  [\chpv(A,B)_g]_{H(f_g)}=\chhh(A,B)_g\in H(f_g).
\end{equation}
We note that $\chhh(A,B)_g=0$ when $n_g$ is even. Similarly, the composition of  $\frac{\Omega^{\an, n_g+1}_{X^g,0}}{df_g \wedge d \Omega^{\an, n_g-1}_{X^g,0}}\to \frac{\widehat{\Omega}^{n_g+1}_{g}}{df_g\wedge d\widehat{\Omega}^{n_g-1}_{g}}$ with \eqref{chn3} transfers a Brieskorn class $[\chpv(A,B)_g]_B$ to a class
\begin{equation}\label{brie1}
[\chpv(A,B)_g]_{N(f_g)}=\chhn(A,B)_g\in N(f_g)\text{ (see \cite{BW,CKK})}.
\end{equation}
In particular, if  $[\chpv(A,B)_g]_B=0$, then $\chhn(A,B)_g=0$. Those two $G$-equivariant Chern characters are related by setting $u=0$. In other words,
\begin{equation}\label{pf1}
\chhn(A,B)_g\xmapsto{u=0}\chhh(A,B)_g\text{ (see \cite[page 194]{shkl} and \cite[Example 6.1]{BW}).}
\end{equation}
In particular, if  $\chhn(A,B)_g=0$, then $\chhh(A,B)_g=0$.

\begin{theorem}\label{th55}
Let $p_g:=\frac{n_g+1}{2}$. For
  $(A,B) \in \MF_G(Q^h, f)$, we have 
\begin{equation}
  \psi_0(\chxh(A,B)) =   \frac{1}{(2 \pi i)^{p_g}}\partial_{t}^{p_g-1}s_0
  \left([\chpv(A,B)_g]_B \right).
\end{equation}
\end{theorem}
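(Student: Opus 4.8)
The strategy is to transport everything to the $g$-fixed locus $X^g$ with potential $f_g$ and to compare the two sides inside the stalk $\mathcal{G}_0$ attached to $f_g$. Concretely, I plan to exhibit $\chpv(A,B)_g$, restricted to ${X^g}'$, as an explicit $d$-exact form whose primitive is --- up to the factor $f_g^{p_g}$ --- the form representing $\chxh(A,B)$; then the Gauss--Manin/Gelfand--Leray dictionary converts ``primitive of a $d$'' into ``$\partial_t$'' inside $\mathcal{G}_0$, and the remaining numerical discrepancy is absorbed by the operator identity $\partial_t^{\,p_g}\,t^{p_g}=\prod_{j=1}^{p_g}(t\partial_t+j)$ together with the flatness of the Gauss--Manin section attached to $\chxh(A,B)$. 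Throughout I normalise the data so that $g$ acts on $E^0,E^1$ by constant matrices $g^0,g^1$, so that \eqref{equivariance} reads $g^0A_g=A_gg^1$ over $(Q^\hen)^g$, and $\ccan$ denotes the map \eqref{labelDe}.

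First I rewrite $\chxh$. On ${X^g}'$ put $\theta:=dA_g\,A_g^{-1}$, a matrix of holomorphic $1$-forms with at most a first-order pole along $f_g=0$. From $dA_g^{-1}=-A_g^{-1}(dA_g)A_g^{-1}$ one gets $A_g^{-1}dA_g\,(dA_g^{-1}dA_g)^{p_g-1}=(-1)^{p_g-1}(A_g^{-1}dA_g)^{n_g}$, and using $g^1=A_g^{-1}g^0A_g$ and cyclicity of the trace this rewrites $\tr_{(Q^\hen)^g}\!\big((A_g^{-1}dA_g)^{n_g}\circ g^1\big)$ as $\phi:=\tr_{(Q^\hen)^g}\!\big(\theta^{n_g}\circ g^0\big)$; hence $\chxh(A,B)=\ccan\bigl[\tfrac{(-1)^{p_g}(p_g-1)!}{(2\pi i)^{p_g}\,n_g!}\,\phi\bigr]$. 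Differentiating $g^0A_g=A_gg^1$ gives $g^0\theta=\theta g^0$, and with $d\theta=\theta^2$, the identity $d(\theta^{n_g})=\theta^{n_g+1}$ (valid since $n_g$ is odd) and graded cyclicity of the trace this forces $\tr(\theta^{n_g+1}\circ g^0)=0$; thus $\phi$ is a \emph{closed} holomorphic form on ${X^g}'$, the class $\ccan[\phi]\in H^{n_g}(X^g_\infty;\CC)_1$ is well defined, its flat extension is the section $t\mapsto[\phi|_{X^g_t}]$, and therefore $\Phi:=\psi_0(\ccan[\phi])$ is the germ at $0$ of that section and satisfies $t\partial_t\,\Phi=0$.

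Next I show $\chpv(A,B)_g$ is $d$-exact on ${X^g}'$. From $A_gB_g=f_g$ we get $A_g\,dB_g=df_g-dA_g\,B_g$, and since $B_g=f_gA_g^{-1}$ this gives $dA_g\,dB_g=-(df_g\wedge\theta+f_g\,\theta^2)$ on ${X^g}'$; raising to the $p_g$-th power and using $df_g\wedge df_g=0$,
\[
(dA_g\,dB_g)^{p_g}=(-1)^{p_g}\Bigl(f_g^{\,p_g}\,\theta^{\,n_g+1}+p_g\,f_g^{\,p_g-1}\,df_g\wedge\theta^{\,n_g}\Bigr)=(-1)^{p_g}\,d\bigl(f_g^{\,p_g}\,\theta^{\,n_g}\bigr).
\]
Since $g^0$ is constant, $d$ commutes with $\tr_{(Q^\hen)^g}(\,\cdot\,\circ g^0)$, so $\chpv(A,B)_g=\tfrac{2(-1)^{p_g}}{(n_g+1)!}\,d\bigl(f_g^{\,p_g}\,\phi\bigr)$ on ${X^g}'$; on the other hand $\chpv(A,B)_g$, having coefficients in $(Q^\hen)^g$, extends across $f_g=0$ and represents the Brieskorn class $[\chpv(A,B)_g]_B\in H''_0$.

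Finally I pass to $\mathcal{G}_0$. For a holomorphic $n_g$-form $\eta$ on ${X^g}'$ of moderate growth, Cartan's formula $\iota_v(d\eta)=\mathcal{L}_v\eta-d(\iota_v\eta)$ with $df_g(v)=1$ shows that $s_0([d\eta]_B)$ equals $\partial_t$ applied to the germ at $0$ of the Gauss--Manin section $t\mapsto[\eta|_{X^g_t}]$ inside $\mathcal{G}_0$. Applying this to $\eta=\tfrac{2(-1)^{p_g}}{(n_g+1)!}f_g^{\,p_g}\,\phi$, using $f_g|_{X^g_t}=t$ and the identification of the germ of $t\mapsto[\phi|_{X^g_t}]$ with $\Phi$, gives $s_0([\chpv(A,B)_g]_B)=\tfrac{2(-1)^{p_g}}{(n_g+1)!}\,\partial_t\bigl(t^{p_g}\,\Phi\bigr)$, whence, using $t\partial_t\,\Phi=0$,
\[
\partial_t^{\,p_g-1}s_0\bigl([\chpv(A,B)_g]_B\bigr)=\tfrac{2(-1)^{p_g}}{(n_g+1)!}\,\partial_t^{\,p_g}t^{p_g}\,\Phi=\tfrac{2(-1)^{p_g}}{(n_g+1)!}\prod_{j=1}^{p_g}(t\partial_t+j)\,\Phi=\tfrac{2(-1)^{p_g}\,p_g!}{(n_g+1)!}\,\Phi .
\]
Since $\psi_0(\chxh(A,B))=\tfrac{(-1)^{p_g}(p_g-1)!}{(2\pi i)^{p_g}\,n_g!}\,\Phi$ and $n_g=2p_g-1$ forces $\tfrac{(p_g-1)!}{n_g!}=\tfrac{2\,p_g!}{(n_g+1)!}$, dividing the last display by $(2\pi i)^{p_g}$ yields exactly $\psi_0(\chxh(A,B))$, as claimed. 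I expect the delicate steps to be the closedness of $\phi$ on ${X^g}'$ --- i.e.\ $\tr(\theta^{n_g+1}\circ g^0)=0$ --- which genuinely uses the commutation $g^0\theta=\theta g^0$ produced by the normalised $G$-action, and the identification $\psi_0\circ\ccan=$ ``germ of the section obtained by restriction to the Milnor fibers'' together with the flatness of that section, which is what makes $t\partial_t\,\Phi=0$ and collapses $\partial_t^{\,p_g}t^{p_g}$ to the scalar $p_g!$; the sign bookkeeping and the reinterpretation of $A_g,B_g$ as convergent analytic matrices on a sufficiently small $X^g$ follow \cite[\S5]{BW2}.
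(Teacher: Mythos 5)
Your proof is correct and follows essentially the same route as the paper's: both sides are computed as explicit germs in $\mathcal{G}_0$ using the same trace identities derived from $A_gB_g=f_g$ and the equivariance relation \eqref{equivariance}, together with the Gelfand--Leray description of $s_0$ and the flatness of the fiberwise restrictions of the closed form $\tr(\theta^{n_g}\circ g^0)$. The only repackaging is that you phrase the key identity as the exactness $(dA_g\,dB_g)^{p_g}=(-1)^{p_g}\,d\bigl(f_g^{p_g}\theta^{n_g}\bigr)$ and then apply Leray's derivative formula and the operator identity $\partial_t^{p_g}t^{p_g}=\prod_{j=1}^{p_g}(t\partial_t+j)$, whereas the paper extracts the factor $df_g$ directly via \eqref{43} and divides it out in the Gelfand--Leray form; these are equivalent, and your explicit treatment of the flatness of $\Phi$ and of the normalization making $g^0,g^1$ constant is, if anything, slightly more careful than the paper's.
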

\begin{proof}
A computation using that $A_gB_g = f_g$ and their differential relations shows that the class $\chxh(A,B)$ in $H^{n_g}_{deR}((X^g)'; \mathbb{C})$ in terms of \eqref{labelDe} is represented by

$$\frac{-(p_g-1)!}{(2 \pi i)^{p_g}n_g!} f_g^{-p_g}  \gtr\big(B_g dA_g (dB_g dA_g)^{p_g-1}\circ g^1\big) + df_g \wedge \omega$$
for some $\omega \in \Gamma((X^g)', \Omega^{\an, n_g-1}_{X^g})$. Since $df_g$ restricts to $0$ in the de Rham cohomology of $X_t^g:=f_g^{-1}(t)$ for $t\ne0$,
$\psi_0(\chxh(A,B)) \in \cG_0^g:=i_\ast(\mathbf{R}^{n_g} {f_g'}_{\ast} \mathbb{C}_{(X^g)'}  \otimes_{\mathbb{C}_{T'}} \cO^{an}_{T'})_0$ is given by the germ
$$\left(t \mapsto \left[\frac{-(p_g-1)!}{(2 \pi i)^{p_g}n_g!}t^{-p_g}  \gtr\big(B_g dA_g (dB_g dA_g)^{p_g-1}\circ g^1\big)|_{X^g_t}\right]\in
H^{n_g}(X_t^g; \mathbb{C})\right)_0.$$
Observe that using $dB_{g}^{-1}=-B_{g}^{-1}(dB_{g})B_{g}^{-1}$,  $ \tr(ab)=(-1)^{|a||b|} \tr(ba)$, and the equivariant relation \eqref{equivariance} in order, we have
$$\begin{aligned}
   \gtr\big((dB_g^{-1} dB_g)^{p_g}\circ  g^0\big) &= (-1)^{p_g}  \gtr\big((B_g^{-1} dB_g)^{2p_g}\circ  g^0\big)\\
  &=-(-1)^{p_g}  \gtr\big(B_g^{-1} dB_g\circ  g^0\circ (B_g^{-1} dB_g)^{2p_g-1}\big)\\
   &=-(-1)^{p_g}  \gtr\big((B_g^{-1} dB_g)^{2p_g}\circ  g^0\big)\\
   &= -\gtr\big((dB_g^{-1} dB_g)^{p_g}\circ  g^0\big),
\end{aligned}$$
which implies that
\begin{equation}\label{42}
 \gtr\big((dB_g^{-1} dB_g)^{p_g}\circ  g^0\big)=0.
\end{equation}
Thus, by $df_g\wedge df_g=0$, \eqref{42}, and $A_gB_g = f_g$, we see that
\begin{equation}\label{43}
  \begin{aligned}
  & \gtr\big((dA_g dB_g)^{p_g}\circ g^0\big)=  \gtr\big(( df_g B_g^{-1} dB_g+f_g dB_g^{-1} dB_g)^{p_g}\circ g^0\big) &\\
& =   f_g^{p_g}  \gtr\big((dB_g^{-1}dB_g )^{p_g}\circ g^0\big)+ p_g f_g^{p_g-1} df_g  \gtr\big(B_g^{-1} dB_g (dB_g^{-1} dB_g)^{p_g-1}\circ g^0\big) \\
&= p_g f_g^{-1} df_g  \gtr\big(A_gdB_g (dA_g dB_g)^{p_g-1}\circ g^0\big).
  \end{aligned}
  \end{equation}
By applying \eqref{43}, the property of the trace map, \eqref{equivariance}, and the differential relations from $B_gA_g=f_g$ in order, we have
$$\begin{aligned}
  &\frac{1}{(2 \pi i)^{p_g}}\partial_t^{p_g-1}s_0 \left([\chpv(A,B)_g]_B\right)&\\
  &= \left(t\mapsto \left[\frac{2p_g(p_g-1)!(-1)^{p_g-1}}{(2 \pi i)^{p_g}(n_g+1)!}t^{-p_g} \gtr\big(A_gdB_g (dA_g dB_g)^{p_g-1}\circ g^0\big)|_{X^g_t}\right]\in
  H^{n_g}(X_t^g; \mathbb{C})\right)_0&\\
 &=\left(t\mapsto \left[\frac{(p_g-1)!}{(2 \pi i)^{p_g}n_g!}t^{-p_g} \gtr\big(dB_g\circ g^0\circ A_g (dB_gdA_g)^{p_g-1}\big)|_{X^g_t}\right]\in
  H^{n_g}(X_t^g; \mathbb{C})\right)_0&\\
   &=\left(t\mapsto \left[\frac{(p_g-1)!}{(2 \pi i)^{p_g}n_g!}t^{-p_g} \gtr\big(dB_gA_g (dB_gdA_g)^{p_g-1}\circ g^1\big)|_{X^g_t}\right]\in
H^{n_g}(X_t^g; \mathbb{C})\right)_0&\\
  &=\left(t\mapsto \left[\frac{-(p_g-1)!}{(2 \pi i)^{p_g}n_g!}t^{-p_g} \gtr\big(B_gdA_g (dB_gdA_g)^{p_g-1}\circ g^1\big)|_{X^g_t}\right]\in
H^{n_g}(X_t^g; \mathbb{C})\right)_0&\\
  &=\psi_0(\chxh(A,B)).& 
\end{aligned}$$

\end{proof}

\begin{corollary} \label{cor913}
For any $(A,B)\in K_0^{\Delta}([\MF_G(Q^\hen,f)])$ and $g\in G$, we have 
$$
S(\chxh(A,B), \chxh(A,B)) \geq 0.
$$ 
In particular,
$$S(\chxh(A,B), \chxh(A,B)) = 0 \text{ if and only if }
\chxh(A,B) = 0$$
where $S$ is the polarization on $H^{n_g}(X_\infty^g; \mathbb{Q})_1$.
  \end{corollary}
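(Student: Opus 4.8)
The plan is to identify $\chxh(A,B)$ as an element of the subspace on which Remark~\ref{remark1} (i.e.\ \cite[Lemma~A.6]{BW2}) forces positivity of the polarization, and then read off both assertions at once. Since $n_g$ is odd, $m:=n_g+1$ is even with $m/2=p_g$, and the $\lambda=1$ summand $(H^{n_g}(X^g_\infty;\QQ)_1,N,F^\bullet,S)$ is a polarized mixed Hodge structure of level $m$, as described in \S~\ref{PMHS} applied to $f_g$. By Remark~\ref{remark1} the restriction of $S$ to $(\Ker N)\cap F^{p_g}H^{n_g}(X^g_\infty;\CC)_1$ is positive definite, so it suffices to prove the inclusion
\[
\chxh(A,B)\in(\Ker N)\cap F^{p_g}H^{n_g}(X^g_\infty;\CC)_1 ;
\]
then $S(\chxh(A,B),\chxh(A,B))\ge 0$, with equality exactly when $\chxh(A,B)=0$. (If $Q^g=\CC$ the class is $0$ by definition and the statement is vacuous.)

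First I would verify that $\chxh(A,B)\in\Ker N$. By construction $\chxh(A,B)$ is, via the canonical map \eqref{labelDe}, the image of a de Rham class on ${X^g}'$; such a class is fixed by the monodromy $M$, because the pullback along the covering $p\colon X^g_\infty\to{X^g}'$ of a class on the base is invariant under the deck transformation $\sigma$, and $\sigma$ induces $M=\sigma^*$ on $H^{n_g}(X^g_\infty;\CC)$ since $p\circ\sigma=p$. Hence $\chxh(A,B)\in\Ker(M-\id)$, and inside $H^{n_g}(X^g_\infty;\CC)_1$—where $M$ equals its unipotent part $M_u=\exp N$—this kernel coincides with $\Ker N$ (because $M_u-\id=N\circ g(N)$ for an invertible unipotent operator $g(N)$).

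Next I would verify that $\chxh(A,B)\in F^{p_g}H^{n_g}(X^g_\infty;\CC)_1$. Represent the $K_0$-class by a free $G$-equivariant matrix factorization $(A,B)$ over $Q^\hen$, which is legitimate by \eqref{kgroup} and does not change the value of $\chxh$. Theorem~\ref{th55} then gives
\[
\psi_0(\chxh(A,B))=\frac{1}{(2\pi i)^{p_g}}\,\partial_t^{p_g-1}s_0\big([\chpv(A,B)_g]_B\big),
\]
so $\psi_0(\chxh(A,B))\in\partial_t^{p_g-1}s_0(H''_0)$. Since $\psi_0$ carries $H^{n_g}(X^g_\infty;\CC)_1$ isomorphically onto $C_0\subseteq V^0$, we also have $\psi_0(\chxh(A,B))\in V^0$, hence $\psi_0(\chxh(A,B))\in V^0\cap\partial_t^{p_g-1}s_0(H''_0)$. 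Because $n_g-p_g=p_g-1$, this is precisely the condition appearing in \eqref{pmhsst} with $\alpha=0$ (equivalently $\lambda=1$) and $q=p_g$, so $\chxh(A,B)\in F^{p_g}H^{n_g}(X^g_\infty;\CC)_1$.

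Combining the two inclusions with Remark~\ref{remark1} completes the argument. I do not expect a serious obstacle: the two points requiring care are (i) matching the normalization of \eqref{pmhsst} so that the Hodge index $q=p_g$ indeed pairs with the power $\partial_t^{p_g-1}$ produced by Theorem~\ref{th55}, and (ii) the (standard) observation that a class pulled back from ${X^g}'$ is monodromy invariant, hence lies in $\Ker N$. Granted Theorem~\ref{th55} and Remark~\ref{remark1}, everything else is formal.
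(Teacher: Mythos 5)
Your proposal is correct and follows essentially the same route as the paper: identify $\chxh(A,B)$ as lying in $\ker(N)\cap F^{p_g}H^{n_g}(X^g_\infty;\CC)_1$ via the canonical map \eqref{labelDe} and Theorem~\ref{th55} together with the definition \eqref{pmhsst} of $F^\bullet$, then invoke the positive definiteness from Remark~\ref{remark1}. You merely spell out in more detail the two steps the paper asserts ``by construction'' (monodromy invariance of pulled-back classes and the index bookkeeping $n_g-p_g=p_g-1$), and these details check out.
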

\begin{proof}
$\chxh(A,B)$ is contained in $\IIm(H^{n_g}((X^g)'; \mathbb{C}) \overset{\ccan}{\to} H^{n_g}(X^g_\infty; \CC)_1) = \ker(N)$ by construction. Thus, Theorem~\ref{th55} and the definition of the decreasing filtration $F^\bullet$ in \eqref{pmhsst} imply
$$\chxh(A,B) \in \ker(N) \cap F^{p_g} H^{n_g}(X_\infty^g; \mathbb{C})_1.$$
  Since the polarization $S$ is positive definite on $\ker(N) \cap F^{p_g} H^{n_g}(X_\infty^g; \mathbb{C})_1$ from Remark~\ref{remark1}, the proof is established.
\end{proof}

\begin{lemma}\label{BvSS}
For any free matrix factorizations $(A',B'), (A,B) \in \MF_G(Q^\hen, f)$ and $g\in G$, we have
$$\Gres(\chpv(A',B')_{g},\chpv(A,B)_g)=(-1)^{p_g}S(\chxh(A',B'),\chxh(A,B)).$$
\end{lemma}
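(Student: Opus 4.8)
The plan is to connect the two sides through the higher residue pairings introduced in \S~\ref{HRpair} and the polarization-compatibility statements in Proposition~\ref{prop_1}, using Theorem~\ref{th55} to transport the Chern-character classes into the $\mathcal D$-module $\mathcal G_0^g$. First I would set $\omega_1 := \chpv(A',B')_g$ and $\omega_2 := \chpv(A,B)_g$, viewed as elements of the Brieskorn lattice $H_0''$ for $f_g$, so that Theorem~\ref{th55} gives $\psi_0(\chxh(A',B')) = \frac{1}{(2\pi i)^{p_g}}\partial_t^{p_g-1} s_0([\omega_1]_B)$ and similarly for $\omega_2$. The key computational input is $\sP^{(-n_g-1)}(s_0(\omega_1), s_0(\omega_2)) = \sK^{(0)}(\omega_1,\omega_2)$ from Proposition~\ref{prop_1}, together with the definition of $\sK^{(0)}$ as Grothendieck's residue, which is exactly the left-hand side $\Gres(\omega_1,\omega_2)$ after unwinding the notation set up at the end of \S~\ref{Saitono}.

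The core of the argument is then a bookkeeping computation: apply the transformation rule \eqref{2_2_4} for $\sP$ under multiplication by powers of $\partial_t^{-1}$ to express $\sP(s_0(\omega_1), s_0(\omega_2))$ in terms of $\sP(\partial_t^{p_g-1}s_0(\omega_1), \partial_t^{p_g-1}s_0(\omega_2))$. Concretely, writing $g_1 = g_2 = \partial_t^{p_g-1}$ (or rather inverting: using that $\partial_t$ acts invertibly on the relevant $C_\alpha$), the factor $g_1(\partial_t^{-1}) g_2(-\partial_t^{-1})$ contributes a sign $(-1)^{p_g-1}$ and shifts the relevant coefficient index by $2(p_g - 1) = n_g - 1$. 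One reads off from property 3) of $\sP$ (the case $\alpha = \beta = 0$, formula \eqref{2_2_3}) that the $\partial_t^{-2}$-coefficient of $\sP(\psi_0(a), \psi_0(b))$ recovers $\frac{-1}{(2\pi i)^{n_g+1}} S(a,b)$ — here both $\chxh$-classes lie in the $\lambda = 1$ eigenspace, which is the $\alpha = 0$ case. Matching the coefficient of $\partial_t^{-(n_g+1)}$ on the $s_0(\omega_i)$ side (via Proposition~\ref{prop_1}) against the coefficient of $\partial_t^{-2}$ on the $\psi_0(\chxh)$ side (via \eqref{2_2_3}), after dividing out the two factors of $\frac{1}{(2\pi i)^{p_g}}$ from Theorem~\ref{th55} and the sign $(-1)^{p_g-1}$ from \eqref{2_2_4}, should leave precisely the claimed identity $\Gres(\omega_1,\omega_2) = (-1)^{p_g} S(\chxh(A',B'),\chxh(A,B))$; the powers of $2\pi i$ cancel because $2p_g = n_g+1$.

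I expect the main obstacle to be the careful tracking of signs and of the index shift in the power series, since several conventions interact: the $(-1)^m$-symmetry in property 1) of $\sK$, the $(-\partial_t^{-1})$ appearing in the second slot of \eqref{2_2_4}, the $(-1)^{p_g}$ versus $(-1)^{p_g-1}$ discrepancy, and the fact that Proposition~\ref{prop_1} asserts $\sP^{(-m)}(s_0(\omega_1),s_0(\omega_2)) = 0$ for $1 \le m \le n_g$ — which is what guarantees the leading term sits in degree exactly $n_g+1$ and no lower-order contamination enters. A secondary subtlety is justifying that $s_0$ and $\psi_0$ intertwine correctly with the $\partial_t$-action on $\mathcal G_0^g$ so that the manipulations \eqref{2_2_4} are legitimate on the nose rather than merely up to $V^{>\alpha}$; this is covered by the $\CC\{t\}\langle\partial_t^{-1}\rangle$-linearity of $s_0$ recorded in \S~\ref{mil_fib} and the isomorphism $\partial_t : C_{\alpha+1} \to C_\alpha$ for $\alpha \ne -1$. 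Once those sign and linearity points are pinned down, the lemma follows by comparing a single matched pair of coefficients.
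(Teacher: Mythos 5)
Your proposal is correct and follows essentially the same route as the paper's proof: transport both classes via Theorem~\ref{th55}, use \eqref{2_2_4} to pull out the factor $(-1)^{p_g-1}\partial_t^{-(n_g-1)}$, apply \eqref{2_2_3} for the $\alpha=\beta=0$ case, and compare the coefficient of $\partial_t^{-(n_g+1)}$ with the one supplied by Proposition~\ref{prop_1} and the definition of $\sK^{(0)}$ as Grothendieck's residue. The sign and $(2\pi i)$-bookkeeping you describe matches the paper's computation exactly.
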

\begin{proof}
 We see that
  \begin{equation}\label{eq11}
    \begin{aligned}
      &\sP\left(s_0 ([\chpv(A',B')_g]_B),s_0 ([\chpv(A,B)_g]_B)\right)&\\
      &=(2\pi i)^{n_g+1}\sP\left(\partial_{t}^{-p_g+1}\psi_0(\chxh(A',B')),\partial_{t}^{-p_g+1}\psi_0(\chxh(A,B))\right)\quad\text{by Theorem~\ref{th55}}&\\
      &=(-1)^{p_g-1}(2\pi i)^{n_g+1}\sP\left(\psi_0(\chxh(A',B')),\psi_0(\chxh(A,B))\right)\partial_{t}^{-n_g+1}\text{by \eqref{2_2_4}}&\\
      &=(-1)^{p_g}S(\chxh(A',B'),\chxh(A,B))\partial_{t}^{-n_g-1}\qquad\qquad\qquad\qquad\qquad\:\,\,\,\text{by \eqref{2_2_3}.}&
    \end{aligned}
\end{equation}
On the other hand, by Proposition~\ref{prop_1} we have
  \begin{multline}\label{eq12}
    \sP\left(s_0 ([\chpv(A',B')_g]_B),s_0 ([\chpv(A,B)_g]_B)\right)\\
    =\sK^{(0)}([\chpv(A',B')_{g}]_B,[\chpv(A,B)_g]_B)\partial_{t}^{-n_g-1}\\
    +\sum_{m\geq n_g+2} \sP^{(-m)}\left(s_0 ([\chpv(A',B')_g]_B),s_0 ([\chpv(A,B)_g]_B)\right)\partial_{t}^{-m}.
    \end{multline}
By comparing \eqref{eq11} with \eqref{eq12}, we conclude that
\begin{multline*}
  \sP^{(-m)}\left(s_0 ([\chpv(A',B')_g]_B),s_0 ([\chpv(A,B)_g]_B) \right)=0 \text{ for $m\geq n_g+2$ and }\\
   \sK^{(0)}([\chpv(A',B')_{g}]_B,[\chpv(A,B)_g]_B)=(-1)^{p_g}S(\chxh(A',B'),\chxh(A,B)).
\end{multline*}
The definition of $\sK^{(0)}$ on a Brieskorn lattice $\Omega^{\an, n_g+1}_{X^g,0}/df_g \wedge d \Omega^{\an, n_g-1}_{X^g,0}$ in \S~\ref{Saitono} establishes the proof.
\end{proof}

We observe that the dual matrix factorization  $(E^\ast,\delta_{E}^{\ast}):=(A,B)^\ast$ of $(E,\delta_E)=(A,B)\in\MF_G(Q^\hen,f)$ is an object in $\MF_G(Q^\hen,-f)$ (see \cite[Section 2.2]{PV: HRR}) and $(A,B)^\ast=(-B^\ast,A^\ast)$. We remark  that 
 \begin{equation}\label{dual1}
    \chpv((A,B)^\ast)_{g^{-1}}=\chpv(A,B)_{g}\text{ (see \cite[Lemma 3.3.5]{PV: HRR}).}
    \end{equation}
A similar proof gives the following.

\begin{lemma}\label{lemma2}
  Let $(A,B) \in \MF_G(Q^\hen, f)$.
$$\chpv((A,-B)^\ast)_{g^{-1}}=(-1)^{{n_g + 1 \choose 2}}\chpv(A,B)_{g}.$$
  \end{lemma}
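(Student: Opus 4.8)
The statement relates the Polishchuk--Vaintrob Chern character component at $g^{-1}$ of the twisted dual $(A,-B)^\ast$ to the component at $g$ of $(A,B)$, up to the sign $(-1)^{\binom{n_g+1}{2}}$. The natural approach is to compute the left-hand side directly from the definition $\chpv(C,D)_{h}=\tfrac{2}{(n_g+1)!}\tr_{(Q^\hen)^h}((dC_h\,dD_h)^{p_h}\circ h^0)$ and to track how dualizing and negating the potential interact with the fixed-locus restriction and the group element. First I would recall from the remark preceding the lemma that $(A,-B)^\ast$, as an object of $\MF_G(Q^\hen,-(-f))=\MF_G(Q^\hen,f)$, is the pair $(B^\ast,-A^\ast)$ (applying $(\,\cdot\,)^\ast=(-B^\ast,A^\ast)$ with $B$ replaced by $-B$ in the potential $-f$, so that the square of the differential is $f\cdot\id$); I would make this identification precise at the level of the block differential $\delta_{(A,-B)^\ast}$ and its $G$-action, so that the relevant $g^0,g^1$ blocks of the dual are the transposes (contragredients) of the $g^1,g^0$ blocks of $(E,\delta_E)$. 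Crucially, the fixed locus $U^{g^{-1}}$ equals $U^{g}$ as a subvariety of $U$, so $(Q^\hen)^{g^{-1}}=(Q^\hen)^{g}$ and $n_{g^{-1}}=n_g$, $p_{g^{-1}}=p_g$; this means both sides live in the same module and the comparison is a genuine identity of residue-type classes.

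\textbf{Key steps.} (1) Substitute: $\chpv((A,-B)^\ast)_{g^{-1}}=\tfrac{2}{(n_g+1)!}\tr_{(Q^\hen)^{g}}\bigl((d(B^\ast)_g\,d(-A^\ast)_g)^{p_g}\circ (g^{-1})^0\bigr)$, where $(g^{-1})^0$ acts on the dual of the degree-$1$ part, i.e.\ it is the contragredient of $g^1$. (2) Pull out the sign: $(d(B^\ast)\,d(-A^\ast))^{p_g}=(-1)^{p_g}(dB^\ast\,dA^\ast)^{p_g}$. (3) Apply the transpose/contragredient: for matrices over the commutative ring $(Q^\hen)^g$ (after restriction), $\tr(C^\ast D^\ast\cdots)=\tr((\cdots DC)^\ast)=\tr(\cdots DC)$ up to the sign coming from reversing the order of the odd one-forms $dB_g,dA_g$ in the product $(dB_g^\ast dA_g^\ast)^{p_g}=((dA_g\,dB_g)^{p_g})^\ast$ read backwards — here I would carefully count the Koszul signs incurred by reversing a length-$2p_g=n_g+1$ alternating product of degree-one forms, which contributes exactly $(-1)^{\binom{n_g+1}{2}}$, and simultaneously check that $\tr(\cdots\circ(g^{-1})^0)$ matches $\tr(\cdots\circ g^0)$ after transposing (using $\tr(M^\ast)=\tr(M)$ and that the contragredient of $g^1$ paired against the dual bases reproduces $g^0$ after cyclically rotating, via the equivariance relation \eqref{equivariance} exactly as in the sign manipulation \eqref{42}). (4) Combine: the prefactor $(-1)^{p_g}$ from step (2) should be absorbed or cancelled against a sign from rewriting $(dA_g\,dB_g)$ versus $(dB_g\,dA_g)$ — note $\gtr((dA_g dB_g)^{p_g}\circ g^0)$ and $\gtr((dB_g dA_g)^{p_g}\circ g^0)$ differ by the same cyclic-rotation sign already used in the proof of Theorem~\ref{th55} — leaving precisely $(-1)^{\binom{n_g+1}{2}}\chpv(A,B)_g$.

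\textbf{Main obstacle.} The delicate point is the bookkeeping of signs: there are three sources of signs (the $p_g$-fold factor $(-1)^{p_g}$ from negating $A^\ast$; the Koszul reversal sign $(-1)^{\binom{n_g+1}{2}}$ from writing a product of $n_g+1$ odd forms in reverse order when transposing; and the cyclic-permutation signs $(-1)^{|a||b|}$ from $\tr(ab)=(-1)^{|a||b|}\tr(ba)$ needed to move the group element and to reconcile $(g^{-1})^0$ with $g^1$), and they must be shown to collapse to the single claimed sign. I expect the cleanest way to organize this is to first establish the analogue identity over the formal power series ring $R^g$ at the level of Hochschild classes — where $\chpv(-)_g\equiv\chhh(-)_g=\tfrac{(-1)^{n_g+1}}{(n_g+1)!}\sTr_{R^g}((d\delta_E)^{n_g+1}\circ g)$ by Proposition~\ref{G-hoch-eq} and \eqref{brie2} — because the supertrace formulation makes the behavior under $\bar E\mapsto\bar E^\ast$ and $g\mapsto g^{-1}$ more transparent (the dual swaps the $\mathbb{Z}/2$-grading, turning $\sTr$ into $-\sTr$ on the relevant piece, and $(d\delta_{E^\ast})^{n_g+1}=((d\delta_E)^{n_g+1})^\ast$ read in reverse order), and only afterwards transport the identity to the Brieskorn-lattice representative $\chpv$. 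The verification that all intermediate ``$df_g\wedge(\text{stuff})$'' error terms vanish (as in \eqref{42}--\eqref{43}) is routine once the equivariance relation is invoked, so the genuine work is entirely the sign count.
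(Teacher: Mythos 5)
There is a genuine error at the very first step, and it is not cosmetic. From the paper's convention $(A,B)^\ast=(-B^\ast,A^\ast)$, substituting $-B$ for $B$ gives $(A,-B)^\ast=(-(-B)^\ast,A^\ast)=(B^\ast,A^\ast)$, not $(B^\ast,-A^\ast)$ as you assert. Your candidate pair is not even an object of $\MF_G(Q^\hen,f)$: since $AB=BA=f\cdot\mathbf{I}$, one has $B^\ast\cdot(-A^\ast)=-(AB)^\ast=-f\cdot\mathbf{I}$, so $(B^\ast,-A^\ast)$ factors $-f$, contradicting your own claim that it lies in $\MF_G(Q^\hen,f)$. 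With the correct identification $(B^\ast,A^\ast)$ there is no $(-1)^{p_g}$ to extract in your step (2), and the lemma follows at once from exactly the ingredients of your step (3): the Koszul reversal sign $(-1)^{\binom{n_g+1}{2}}$ for transposing a product of $n_g+1$ odd one-forms, $\tr(M^\ast)=\tr(M)$, cyclicity of the trace, the equivariance relation \eqref{equivariance}, and the fact that $g^{-1}$ acts on $(E^0)^\ast$ by $(g^0)^\ast$. That is precisely the paper's proof.

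The spurious $(-1)^{p_g}$ you introduce cannot be cancelled as you hope in step (4). Transposing $(d(B_g)^\ast\, d(A_g)^\ast)^{p_g}$ already produces $\bigl((dA_g\,dB_g)^{p_g}\bigr)^\ast$ with the factors in the order appearing in the definition of $\chpv(A,B)_g$, so there is no further ``$dA_g\,dB_g$ versus $dB_g\,dA_g$'' rotation sign available to absorb it. Since $\binom{n_g+1}{2}=p_g(2p_g-1)\equiv p_g \pmod 2$ for odd $n_g$, your computation would output $(-1)^{p_g}(-1)^{\binom{n_g+1}{2}}=+1$ rather than the claimed $(-1)^{\binom{n_g+1}{2}}$; step (4) as written is wishful bookkeeping rather than an argument. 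The proposed detour through the supertrace formula of Proposition~\ref{G-hoch-eq} is unnecessary (and adds its own grading-convention pitfalls — dualizing does not swap the $\mathbb{Z}/2$-grading here, as the formula $(A,B)^\ast=(-B^\ast,A^\ast)$ already shows); the direct two-line trace computation suffices once the dual is identified correctly.
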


\begin{proof}
Let $(A,B)\in\MF_G(Q^\hen,f)$. Then $(A,-B)^\ast=(B^\ast,A^\ast)\in  \MF_G(Q^\hen, f)$. By $\tr(a^\ast)= \tr(a)$ and the equivariant relation \eqref{equivariance}, we have
  \begin{align*}
  &\chpv((A,-B)^\ast)_{g^{-1}}=\frac{2 \gtr\left((d(B_g)^\ast d(A_g)^\ast)^{p_g}\circ (g^0)^\ast\right)}{(n_g+1)!}&\\
    =& (-1)^{{n_g + 1 \choose 2}}\frac{2 \gtr\left( \left(g^0\circ(dA_gdB_g)^{p_g}\right)^\ast\right)}{(n_g+1)!}=(-1)^{{n_g + 1 \choose 2}}\frac{2 \gtr\left( g^0\circ(dA_gdB_g)^{p_g}\right)}{(n_g+1)!}&\\
    =& (-1)^{{n_g + 1 \choose 2}}\frac{2 \gtr\left( (dA_gdB_g)^{p_g}\circ g^0)\right)}{(n_g+1)!}= (-1)^{{n_g + 1 \choose 2}}\chpv(A,B)_{g}.&
  \end{align*}
\end{proof}

 \begin{Rmk}\label{lem45}
  Under \ref{enun1} and \ref{enun2} in \S~\ref{mil_fib}, from \cite[Theorem 1.1]{Bar} and the Hochschild-Kostant-Rosenberg isomorphism again we have
\begin{align}
  HP_*(\MF_G(Q,f)) \cong (\bigoplus_{g\in G} P(f_g))^G&\text{ where }\label{G-hopn1}\\
  P(f_g):=\frac{\Omega^{n_g+1}_{Q^g/\CC}(\!(u)\!)}{(ud-df_g\wedge)\Omega^{n_g}_{Q^g/\CC}(\!(u)\!)}&\text{ unless }Q^g=\CC.\label{G-hopn2}
\end{align}
In \cite{BW,CKK} one can find the definition of a periodic cyclic homology $HP_\ast$ and its identification in \eqref{G-hopn1}. The naturality of Chern character maps gives the following commutative diagram;
$$\xymatrix{ K_0 (\MF_G(Q,f)) \ar[rr]^{\Chhn} \ar[rd]_{\Chhp} &   & \ar[dl]_{\ccan} HN_* ( \MF_G(Q,f))    &   \\
         & HP_* ( \MF_G(Q,f))  &  
  &      }$$
where $\Chhp$ is the $G$-equivariant periodic cyclic homology valued Chern character. Since the canonical map $\ccan$ becomes an embedding under \ref{enun2} in \S~\ref{mil_fib} (see \cite[page 194]{shkl} and \cite[page 32]{BW2}), via the Hochschild-Kostant-Rosenberg isomorphisms which send categorical Chern characters $\Chhn$ and $\Chhp$ to twisted differential form valued Chern characters $\chhn$ and $\chhp$ respectively; see \cite{BW,CKK,shkl},  we may see that $\chhn(\bar{E})=0$ if and only if $\chhp(\bar{E})=0$ for $\bar{E} \in K_0(\MF_G(Q, f))$.
\end{Rmk}

\begin{theorem}\label{math1}
  Let $(A,B) \in K_0(\MF_G(Q,f))$. For all $G$-equivariant matrix factorizations $(A',B') \in K_0(\MF_G(Q,f))$, the Euler pairing
  $$\chi\big((A',B'), (A,B)\big)= 0$$
  if and only if $\chhp(A,B) = 0$.
  \end{theorem}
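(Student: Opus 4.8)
The plan is to reduce the Euler pairing on $K_0(\MF_G(Q,f))$ to the polarization $S$ on the vanishing cohomology of the fixed loci, fixed-point component by fixed-point component, and then invoke Corollary~\ref{cor913} together with Remark~\ref{lem45}. First I would recall the $G$-equivariant Hirzebruch--Riemann--Roch theorem of Polishchuk--Vaintrob \cite{PV: HRR}, which expresses the Euler pairing $\chi\big((A',B'),(A,B)\big)$ as a sum over conjugacy classes $[g]$ of residue-type pairings of the components $\chhh(A',B')_{g^{-1}}$ and $\chhh(A,B)_{g}$ (with suitable Todd-class corrections, which for the residue pairing $\GGres$ on the Milnor algebra amount to the canonical generator). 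The key point is that this residue pairing is, up to a nonzero scalar, exactly $\GGres(\chpv(A',B')_{g^{-1}},\chpv(A,B)_g)$, so that Lemma~\ref{BvSS} and the duality Lemma~\ref{lemma2} (or \eqref{dual1}) convert each summand into $\pm S\big(\chxh((A,B)^\ast),\chxh(A',B')\big)$-type expressions on $H^{n_g}(X^g_\infty;\mathbb{Q})_1$.

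Next I would organize the argument as follows. For the ``only if'' direction, suppose $\chhp(A,B)\ne 0$; by Remark~\ref{lem45} this is equivalent to $\chhn(A,B)\ne 0$, hence (by \eqref{pf1} and \eqref{brie1}, since setting $u=0$ and the Brieskorn-class considerations control vanishing) some component forces $\chxh(A,B)\ne 0$ for a suitable $g$ with $n_g$ odd. Then take $(A',B')=(A,B)^\ast$, the dual matrix factorization; by \eqref{dual1} the corresponding HRR summand becomes a positive multiple of $S(\chxh(A,B),\chxh(A,B))$, which by Corollary~\ref{cor913} is strictly positive and, crucially, the other summands either vanish or can be made not to cancel it --- this is where one must be careful about the pairing between the $g$ and $g^{-1}$ components and about elements of finite order. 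So $\chi\big((A,B)^\ast,(A,B)\big)\ne 0$, proving the contrapositive. For the ``if'' direction, suppose $\chhp(A,B)=0$; then each $\chhn(A,B)_g=0$, hence each $\chxh(A,B)=0$ by Corollary~\ref{cor913} (via $S(\chxh(A,B),\chxh(A,B))=0$), and then Lemma~\ref{BvSS} shows every residue summand $\GGres(\chpv(A',B')_{g^{-1}},\chpv(A,B)_g)$ vanishes for all $(A',B')$; applying HRR again gives $\chi\big((A',B'),(A,B)\big)=0$.

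The main obstacle I anticipate is the nondegeneracy bookkeeping in the ``only if'' direction: the HRR formula pairs the $[g]$-component of one Chern character against the $[g^{-1}]$-component of the other, and $\chxh$ lives on $H^{n_g}(X^g_\infty;\mathbb{C})_1$ while the relevant positivity statement (Remark~\ref{remark1}, Corollary~\ref{cor913}) is a statement about $S$ on a single fixed locus. One must check that choosing $(A',B')=(A,B)^\ast$ (using $(A,B)^\ast=(-B^\ast,A^\ast)$ and Lemma~\ref{lemma2}/\eqref{dual1}) aligns the $g$ and $g^{-1}$ contributions so that the total Euler pairing is a \emph{sum} of nonnegative terms $\sum_{[g]} c_g\, S(\chxh(A,B),\chxh(A,B))$ with all $c_g>0$ (no sign cancellation across conjugacy classes), so that nonvanishing of one component is not killed by another; this requires matching the $(-1)^{p_g}$ in Lemma~\ref{BvSS} against the $(-1)^{\binom{n_g+1}{2}}$ in Lemma~\ref{lemma2} and the normalization constants in the HRR formula. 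Once this sign/positivity ledger is settled, the rest is a direct assembly of the lemmas already proved.
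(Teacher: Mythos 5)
You correctly identify the target structure — pass through HRR, convert residue pairings to the polarization $S$ via Lemma~\ref{BvSS} and Lemma~\ref{lemma2}, then use the positivity in Corollary~\ref{cor913} and Remark~\ref{lem45} — but the ``only if'' direction as proposed has a genuine gap that you yourself flag without resolving: the sign/positivity ledger does not close with your test object $(A',B')=(A,B)^\ast$.

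Concretely, with $(A',B')=(A,B)^\ast$ the HRR formula \eqref{HRrR} together with \eqref{dual1} and Lemma~\ref{BvSS} yields
\[
\chi\bigl((A,B)^\ast,(A,B)\bigr)=\sum_{h\in G}\frac{S\bigl(\chxh[h](A,B),\chxh[h](A,B)\bigr)}{|G|\det(\id-h:U/U^h)},
\]
where I write $\chxh[h]$ for the $h$-component. The signs $(-1)^{\binom{n_h+1}{2}}$ and $(-1)^{p_h}$ do cancel as you hope, but the denominators $\det(\id-h:U/U^h)$ are products of $(1-\zeta)$ over roots of unity $\zeta\ne1$ and are complex, not positive real. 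So even though each numerator is $\ge0$ by Corollary~\ref{cor913}, the sum is a $\CC$-linear combination with non-positive coefficients, and nonvanishing of one $S$-term can be destroyed by cancellation against others. Your phrase ``can be made not to cancel it'' is exactly what must be proved and is not.

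The paper's proof closes this gap with a device you don't mention: for a fixed $g$ with $n_g$ odd, it chooses a virtual representation $\eta_g\in K_0(BG)$ whose character $\chg(\eta_g)$ is supported on the single conjugacy class $[g^{-1}]$ (possible since \eqref{ch for BG} is an isomorphism), and tests against $(A',B'):=\eta_g\ot(A,-B)^\ast$ rather than against $(A,B)^\ast$ alone. Tensoring by $\eta_g$ kills every summand in \eqref{HRrR} except the $[g]$-class, reducing the Euler pairing to a single term that is a \emph{nonzero scalar} times $S(\chxh(A,B),\chxh(A,B))$. This isolates one fixed locus at a time and makes Corollary~\ref{cor913} directly applicable, with no cross-conjugacy-class cancellation to worry about. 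Also, a small point in your ``if'' direction: you invoke Corollary~\ref{cor913} to pass from $\chhn(A,B)_g=0$ to $\chxh(A,B)=0$, but that corollary goes the wrong way for this; the paper instead just sets $u=0$ (equation \eqref{pf1}) to get $\chhh(A,B)=0$ and feeds that straight into HRR, which is both shorter and avoids needing injectivity of the completion map implicitly used in your route.
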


\renewcommand\qedsymbol{\textbf{Q.E.D.}}

\begin{proof}
Recalling $K_0(\MF_G(Q,f))\cong K_0^{\Delta}([\MF(Q^\hen,f)])$ in Section~\ref{grogp}, it suffices to prove this theorem for $ K_0^{\Delta}([\MF(Q^\hen,f)])$. Let $(A,B) \in  K_0^{\Delta}([\MF_G(Q^\hen,f)])$, $g \in G$, and $n_g$ be odd. By \eqref{ch for BG} there exists a representation $\eta_g \in K_0 (BG)$ such that $\chg (\eta_g) _h = 0$ for $h \notin [g^{-1}]$ and $\chg (\eta_g ) _{g^{-1}} \ne0$. Take $(A',B') := \eta_g \ot (A,-B)^\ast \in  K_0^{\Delta}([\MF_G(Q^\hen,f)])$. Recalling the Hirzebruch–Riemann–Roch theorem for $G$-equivariant matrix factorizations \cite[Theorem 4.2.1]{PV: HRR}
  \begin{equation}\label{HRrR}
    \chi\big((A',B'), (A,B)\big)  
  =        \sum_{h\in G}\frac{(-1)^{{n_h + 1 \choose 2}}\rrGres(\chhh(A',B')_{h^{-1}},\chhh(A,B)_{h})}{|G|\det(\id-h : U/U^h)},
\end{equation}
from \eqref{brie2} and \eqref{HRrR} we obtain
\begin{align*}
  &\chi\big((A',B'), (A,B)\big)  & \\
  = &       \sum_{h\in G}\frac{(-1)^{{n_h + 1 \choose 2}}\rrGres([\chpv(A',B')_{h^{-1}}]_{H(f_{h^{-1}})},[\chpv(A,B)_{h}]_{H(f_h)})}{|G|\det(\id-h : U/U^h)}    &   \\
 = &    \frac{(-1)^{{n_g + 1 \choose 2}}\chg(\eta_g)_{g^{-1}}\mathsf{c}_g}{|G|\det(\id-g : U/U^g)} \Gres([\chpv((A,-B)^\ast)_{g^{-1}}]_{H(f_{g^{-1}})},
  [\chpv(A,B)_{g}]_{H(f_g)})   &    \\
  = &    \frac{ (-1)^{{n_g + 1 \choose 2}}\chg(\eta_g)_{g^{-1}}\mathsf{c}_g}{|G|\det(\id-g : U/U^g)} \Gres(\chpv((A,-B)^\ast)_{g^{-1}},
    \chpv(A,B)_{g})   &    \\
    = &     \frac{ \chg(\eta_g)_{g^{-1}}\mathsf{c}_g}{|G|\det(\id-g : U/U^g)}\Gres(\chpv(A,B)_g,\chpv(A,B)_{g}) \qquad\quad\;\;\;\;\;\text{ by Lemma~\ref{lemma2}}  &   \\
= & \frac{ (-1)^{p_g}\chg(\eta_g)_{g^{-1}}\mathsf{c}_g}{|G|\det(\id-g : U/U^g)} S(\chxh(A,B),\chxh(A,B)) \qquad\qquad\qquad\quad\text{ by Lemma~\ref{BvSS}. }&
\end{align*}
where $\mathsf{c}_{g}:=[G:C_G(g)]$. Thus,  $\chi\big((A',B'), (A,B)\big)= 0$ implies
$$S(\chxh(A,B),\chxh(A,B))=0.$$
By Corollary~\ref{cor913}, we deduce that $\chxh(A,B)=0$. Combining it with Theorem~\ref{th55} and recalling the injectivity of $\psi_0,s_0,\partial_{t}^{-1}$, $[\chpv(A,B)_g]_B=0$ in $\frac{\Omega^{\an, n_g+1}_{X^g,0}}{df_g \wedge d \Omega^{\an, n_g-1}_{X^g,0}}$. By \eqref{brie1}, we establish $\chhn(A,B)_g=0$. Since it is true for all $g\in G$ with odd $n_g$ and $\chhn(A,B)_g=0$ when $n_g$ is even, we conclude that $\chhn(A,B)=0$ which implies that $\chhp(A,B)=0$ by Remark~\ref{lem45}. 

On the other hand, combining Remark~\ref{lem45} with \eqref{pf1} one can see that the converse also follows easily from the Hirzebruch–Riemann–Roch theorem for $G$-equivariant matrix factorizations.

\end{proof}

\end{document}